\documentclass[10pt, a4paper]{amsart} 
\usepackage[left=2.50cm, right=2.50cm, top=2.50cm, bottom=2.50cm]{geometry} 
\usepackage{amscd, amsfonts, amsmath, amssymb, amsthm, arydshln, fixmath, graphicx, mathrsfs, overpic, tikz, tikz-cd}
\usepackage{mathtools}
\usepackage{hyperref}
\usepackage[all]{xy} 
\usepackage{tikz}
\usepackage{pgfplots}
\usepackage{pgflibraryarrows}
\usepackage{pgflibrarysnakes}
\usepackage{dsfont}
\makeindex

\theoremstyle{definition}
\newtheorem{Unity}{Unity}[section] 
\newtheorem*{Definition*}{Definition} 
\newtheorem{Definition}[Unity]{Definition}

\theoremstyle{plain} 
\newtheorem*{Theorem*}{Theorem}
\newtheorem{Theorem}[Unity]{Theorem}
\newtheorem{Proposition}[Unity]{Proposition}
\newtheorem{Corollary}[Unity]{Corollary}
\newtheorem{Lemma}[Unity]{Lemma}

\theoremstyle{remark} 
\newtheorem*{Remark*}{Remark}
\newtheorem{Remark}[Unity]{Remark}
\newtheorem{Example}[Unity]{Example}
\numberwithin{Unity}{section}

\newcommand{\PP}{\mathbb{P}}
\newcommand{\cC}{\mathcal{C}}

\newcommand{\cO}{\mathcal{O}}

\newcommand{\Hom}{\mathrm{Hom}}

\newcommand{\Rep}{\mathrm{Rep}}

\newcommand{\Spec}{\mathrm{Spec\,}}

\newcommand{\Qcoh}{\mathfrak{Qcoh}}
\newcommand{\Vect}{\mathfrak{Vect}}

\newcommand{\coker}{\mathrm{coker\,}}
\newcommand{\Gal}{\mathrm{Gal}}

\newcommand{\codim}{\operatorname{codim}}

\begin{document}

\title{Varieties with prescribed fundamental group schemes}
\author{Lingguang Li}
\address{School of Mathematical Sciences,
Key Laboratory of Intelligent Computing and Applications (Tongji University), Ministry of Education, Shanghai 200092, CHINA}
\email{LiLg@tongji.edu.cn}
\author{Hao Wang}
\address{School of Mathematical Sciences,
Key Laboratory of Intelligent Computing and Applications (Tongji University), Ministry of Education, Shanghai 200092, CHINA}
\email{wanghao0410@tongji.edu.cn}

\maketitle

\begin{abstract}
We study the realization problem for Tannakian fundamental group schemes: given an affine $k$-group scheme $G$, when does there exist a smooth projective connected pointed $k$-variety $(X,x)$ whose fundamental group scheme is isomorphic to $G$? We establish exact sequences for fundamental group schemes associated with principal bundles, and combine them with a Godeaux--Serre construction and Lefschetz-type theorems. As applications, we show that every finite \'etale $k$-group scheme is realized as the $S$-, Nori, and extended Nori fundamental group scheme of a smooth projective variety, while every finite constant group scheme is realized as the $F$- and \'etale variants.
\end{abstract}

\tableofcontents

\section{Introduction}

Let $k$ be a field and  $G$  an affine $k$-group scheme.  The problem
that motivates this paper is the following realization question:
Does there exist a smooth projective connected pointed
$k$-variety $(X,x)$
whose fundamental group scheme is isomorphic to $G$?
Since there is no single  fundamental group scheme, this question
must be considered relative to a chosen Tannakian category.  In this paper we
study it for the $S$-, Nori, extended Nori, $F$-, extended $F$-, \'{e}tale,
and extended \'{e}tale fundamental group schemes.  Our principal result gives
a positive answer for finite \'{e}tale group schemes in several of these
theories.  The proof separates the problem into a geometric construction of
a free quotient and a Tannakian calculation of the fundamental group scheme
of that quotient.

The analogous realization problem for the topological fundamental group of a
smooth complex projective variety is classical and remains difficult in
general.  Such groups form a special subclass of finitely presented groups:
they are fundamental groups of compact K\"ahler manifolds and satisfy strong
geometric and representation-theoretic restrictions (see \cite{Ara95}).  The finite case, however, is remarkably flexible.  The
Godeaux--Serre construction \cite[Proposition~15]{Ser58} shows that every finite group $\Gamma$ occurs as
the fundamental group of a smooth projective variety.  Starting from a faithful projective
representation of $\Gamma$, one replaces it by a sufficiently large direct
sum so that the nonfree locus in the ambient projective space has large
codimension.  A general invariant complete intersection can then be chosen
inside the free locus.  The Lefschetz hyperplane theorem makes this complete
intersection simply connected, and its free quotient has fundamental group
$\Gamma$.

Grothendieck's \'{e}tale fundamental group gives an algebro-geometric version
of this picture by classifying finite \'{e}tale coverings
\cite{Gro60}.  Over $\mathbb C$, the Riemann existence theorem identifies the
\'{e}tale fundamental group with the profinite completion of the topological fundamental group, so the Godeaux--Serre construction also realizes every finite group as
the \'{e}tale fundamental group of a smooth projective variety.  Over a
non-algebraically closed field, the arithmetic fundamental group fits into
the exact sequence
\[
1\rightarrow
\pi_1^{\mathrm{\acute et}}(X_{\bar{k}},\bar{x})
\rightarrow
\pi_1^{\mathrm{\acute et}}(X,\bar{x})
\longrightarrow
\Gal(k^{\mathrm{sep}}/k)
\rightarrow 1
\]
\cite[Expos\'{e}~IX, Th\'{e}or\`eme~6.1]{Gro60}.  Rungtanapirom proved that every
continuous extension of the absolute Galois group of $k$ by a finite group is
realized by the arithmetic fundamental group of a geometrically connected
smooth projective $k$-variety of any prescribed dimension at least two
\cite{Run18}.  Together, these results settle the corresponding realization problem for
extensions with finite kernel in the classical \'{e}tale setting, but they do not
detect finite torsors under non-\'{e}tale group schemes.

Nori's fundamental group scheme is the Tannaka dual of the category of
essentially finite vector bundles and classifies pointed torsors under finite
$k$-group schemes \cite{Nor76,Nor82}.  Subsequent constructions enlarge or
modify the underlying tensor category.  The $S$-fundamental group scheme is
defined by numerically flat vector bundles \cite{Lan11,Lan12}; the
$F$-fundamental group scheme is obtained from Frobenius-finite bundles in
positive characteristic \cite{AmBi10}; and extended versions include
the extended Nori and related fundamental group schemes \cite{Ota17,AdAm25}.
The resulting realization problem is therefore richer than its
\'{e}tale analogue: one asks not only which finite groups, but which finite
group schemes, can occur. The behavior of fundamental groups under quotient constructions has been studied in several settings.  Biswas--Hogadi--Parameswaran proved that, under suitable hypotheses on the action of a connected reductive group on a smooth projective variety, the quotient map induces an isomorphism on \'etale fundamental groups; an analogous statement holds for topological fundamental groups over $\mathbb C$ \cite{BHP15}. Biswas--Hai--dos Santos established Armstrong-type results for certain quotient varieties for $F$-divided and Nori fundamental group schemes \cite{BHS21}.  These results indicate that quotient constructions provide a natural mechanism for relating the fundamental group of a variety to that of its quotient.  Motivated by this viewpoint, we study principal bundles from a Tannakian perspective and develop a general criterion for the fundamental group schemes of the quotient.

Our first contribution supplies this Tannakian input.  
\begin{Theorem}[Theorem~\ref{thm:general-exact-sequence}]
Let $k$ be a field, $G$ an affine group scheme over $k$, $X$ a connected scheme over $k$,
$f:Y\rightarrow X $ a principal $G$-bundle, $\cC_X\subset \Vect(X)$ and $\cC_Y \subset \Vect(Y)$
 the Tannakian categories on $X$ and $Y$ respectively, such that $f^*\cC_X\subseteq \cC_Y$, $y\in Y(k)$ a rational point lying over $x \in X(k)$. 
Then:
\begin{enumerate}
 \item If $\eta_X^Y\bigl(\Rep^f_k(G)\bigr)\subseteq \cC_X$, then the natural homomorphism $ 
 \pi(\cC_X,x) \to G$ induced by the Tannakian dual of the  functor $\eta_X^Y:\Rep^f_k(G) \to \cC_X$ given by $V\mapsto (\cO_P\otimes_k V)^G$ is faithfully flat. 
 \item If $\eta_X^Y\bigl(\Rep^f_k(G)\bigr)\subseteq \cC_X$, then $f^*:\cC_X\to \cC_Y$ is observable if and only if the natural sequence of affine group schemes over $k$ is exact:
\[
\pi(\cC_Y, y)\rightarrow \pi(\cC_X, x)\rightarrow G \rightarrow 1.
\]
 \item If $f_*\cC_Y \subseteq \cC_X$,   then $G$ is a finite group scheme, $\eta_X^Y\bigl(\Rep^f_k(G)\bigr)\subseteq \cC_X$, $f^*:\cC_X\to \cC_Y$ is observable, and the natural homomorphism $\pi(\cC_Y,y)\rightarrow \pi(\cC_X,x)$ is a closed immersion.
In particular, the natural sequence of affine group schemes over $k$ is exact:
\[
        1\rightarrow \pi(\cC_Y,y)
        \rightarrow \pi(\cC_X,x)
        \rightarrow G
        \rightarrow 1.
\]  
\end{enumerate}
\end{Theorem}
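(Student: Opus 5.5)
The plan is to work entirely on the Tannakian side, using the standard criteria for homomorphisms of affine group schemes; I take these to be available from Deligne--Milne and Esnault--Hai--Sun. For a tensor functor $F:\Rep_k(H)\to\Rep_k(H')$ between neutral Tannakian categories, the dual $H'\to H$ is faithfully flat if and only if $F$ is fully faithful and its essential image is closed under subobjects. Given $L\to H\to A$ with $A\to$ faithfully flat onto the quotient, exactness in the middle is the criterion of Esnault--Hai--Sun. It asks three things. First, an object of $\Rep(H)$ becomes trivial on $L$ if and only if it lies in the image of $\Rep(A)$. Second, the maximal trivial subobject of its restriction to $L$ comes from a subobject in $\Rep(H)$; this is the observability condition. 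Third, every object of $\Rep(L)$ is a quotient, or subobject, of the restriction of some object of $\Rep(H)$. Finally, $L\to H$ is a closed immersion if and only if every object of $\Rep(L)$ is a subquotient of a restriction.

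For (1), I will first check that $\eta=\eta_X^Y$ is a tensor functor compatible with fibre functors. By descent along the $G$-torsor $f$, the functor $V\mapsto (\cO_Y\otimes_k V)^G$ is an exact tensor equivalence from $\Rep^f_k(G)$ onto the category of vector bundles on $X$ whose pullback is $G$-equivariantly isomorphic to $\cO_Y\otimes V$. Its fibre at $x$ is identified with $V$ through the choice of $y$. Full faithfulness then reduces to $H^0(X,\eta(V))=H^0(Y,\cO_Y\otimes V)^G=V^G$, and this uses that $Y$ has only constants, or more precisely that the $G$-invariants computed on the torsor are the invariants. For closedness under subobjects, I take a subbundle $E'\subset\eta(V)$ inside $\cC_X$. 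Then $f^*E'$ is a subbundle of the trivial bundle $\cO_Y\otimes V$ lying in $\cC_Y$. Since the subobjects of trivial objects in a Tannakian category are trivial, $f^*E'=\cO_Y\otimes V'$ for a $G$-stable subspace $V'$, and hence $E'=\eta(V')$.

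For (2), I compose and note that $f^*\circ\eta$ lands in trivial bundles, so the sequence is a complex. Condition (a) is the descent statement: if $f^*E\cong\cO_Y^n$, then $H^0(Y,f^*E)$ is a $G$-representation $V$ and $E\cong\eta(V)$. Condition (b) is exactly observability of $f^*$, where the maximal trivial subbundle of $f^*E$ is $\cO_Y\otimes H^0(Y,f^*E)$ and descends. So the equivalence in (2) is essentially a translation, and I will write it out carefully in both directions.

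For (3), I will use the counit $f^*f_*E\to E$, which is surjective because $f$ is affine and faithfully flat. So every object of $\cC_Y$ is a quotient of a pullback, and this gives the closed-immersion criterion once $f_*$ preserves $\cC$. Applying this to $E=\cO_Y$, the bundle $f_*\cO_Y$ has finite rank, so $f$ is finite, and hence $G$ is finite. Moreover $f_*\cO_Y=\eta(k[G])$ for the regular representation, which contains every representation as a subquotient, and so $\eta(\Rep^f_k(G))\subseteq\cC_X$. For observability, I take $E\in\cC_X$ with maximal trivial $T\subset f^*E$. The adjunction gives $f_*T\subset f_*f^*E=E\otimes f_*\cO_Y$, and the $G$-invariant part realises $T$ as the pullback of a subbundle of $E$. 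The main obstacle I expect is this descent of $T$: I need $G$-stability of $T$, which holds by maximality, and I need the descended sheaf to be a subbundle in $\cC_X$. I would handle this via the descent equivalence together with the fact that $\cC_X$ is closed under subobjects inside $E\otimes f_*\cO_Y$. The exact sequence then follows by combining (1), (2), and the closed-immersion statement.
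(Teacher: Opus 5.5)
There is a genuine gap, centred on observability. Your treatment of (1), and your verification of conditions (a) and (b) of Theorem~\ref{Thm1}(3), match the paper. The problem is that in (2) you identify observability of $f^*$ with condition (b), and that is the wrong condition; the error carries over into (3).

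Condition (b) says that the maximal trivial subobject $\cO_Y\otimes_k H^0(Y,f^*E)$ of $f^*E$ descends to a subobject of $E$. It holds \emph{unconditionally} once $\eta_X^Y(\Rep^f_k(G))\subseteq\cC_X$, and your own descent argument proves it with no hypothesis on $f^*$. So it cannot be equivalent to observability. As a result, the direction ``exact $\Rightarrow$ observable'' has no argument in your plan, and in the forward direction the remaining condition (c) is never checked.

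Observability in the paper's sense (a rank-one $L\hookrightarrow f^*E$ forces $(L^{\otimes n})^\vee\hookrightarrow f^*F$ for some $F\in\cC_X$) corresponds to condition (c): every quotient $f^*W\twoheadrightarrow W'$ in $\cC_Y$ embeds into some $f^*F$. The bridge from the rank-one formulation to (c) is the D'Addezio--Esnault equivalence, Lemma~\ref{observable}, which your proposal never invokes. With it, (2) becomes: (a) and (b) always hold, and (c) is equivalent to observability. Also, your third condition (every object of $\Rep(L)$ is a quotient or subobject of a restriction) is the form of the criterion that presupposes $L\to H$ is a closed immersion, which is not available in (2). What is needed there is that quotients of restrictions embed into restrictions.

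In (3) the same confusion recurs: descending the maximal trivial $T\subset f^*E$ establishes (b), not observability. The fix is already in your counit observation once you dualize. For any $M\in\cC_Y$, the epimorphism $f^*f_*(M^\vee)\twoheadrightarrow M^\vee$ dualizes to a monomorphism $M\hookrightarrow f^*\bigl((f_*M^\vee)^\vee\bigr)$ with $(f_*M^\vee)^\vee\in\cC_X$. This gives condition (c) directly. Taking $M=L^\vee$ for a rank-one $L\hookrightarrow f^*E$ gives observability with $n=1$.

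A smaller point: from ``$k[G]$ contains every representation as a subquotient'' you cannot yet conclude $\eta_X^Y(V)\in\cC_X$, because $\cC_X$ is not closed under arbitrary subbundles in $\Vect(X)$. Instead, write $V$ as the kernel of a map $k[G]^{\oplus n}\to k[G]^{\oplus m}$ and use exactness of $\eta_X^Y$, as in Proposition~\ref{regular-representation-inclusion}.
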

This criterion isolates the categorical obstruction to exactness and applies
uniformly to different fundamental group schemes.  For a principal bundle
under a finite group scheme, we prove that finite pushforward preserves
numerically flat vector bundles; together with the corresponding results for
essentially finite bundles, this yields short exact sequences for
$\pi^S$, $\pi^N$, and $\pi^{EN}$.  For finite \'{e}tale Galois coverings,
Frobenius pullback and \'{e}tale triviality are compatible with pushforward,
which gives the analogous sequences for
$\pi^F$, $\pi^{EF}$, $\pi^{\acute et}$, and $\pi^{E\acute et}$, see Theorem~\ref{exact-certain-categories} and Theorem~\ref{finite-etale-exact}.  We also
exhibit finite \'{e}tale coverings for which the local, extended local, and
unipotent categories are not preserved by pushforward, showing that the
scope of the criterion is genuine, see Example~\ref{elliptic-counterexample}.

The second contribution is geometric.  For a finite \'{e}tale $k$-group
scheme $G$ and an integer $r\geq2$, we construct a linear action of $G$ on a
projective space and a $G$-stable smooth connected complete intersection
$Y$ of dimension $r$ contained in the free locus.  Its quotient
$X=Y/G$ is smooth and projective, and $Y\to X$ is a finite \'{e}tale
principal $G$-bundle.  The construction follows the Godeaux--Serre method:
taking repeated copies of a projective representation forces the nonfree
locus to have arbitrarily large codimension.  Choosing the defining degrees sufficiently large
and applying Lefschetz-type theorems for the $S$-fundamental group scheme
\cite{Lan11,LiTi26,LiTian26}, we obtain a cover $Y$ with trivial relevant
fundamental group schemes.  The exact sequences above then identify the
fundamental group schemes of the quotient.

The resulting realization theorem is the following:

\begin{Theorem}[Theorem~\ref{thm:GS-problem}]
Let $k$ be a perfect field,  $G$ an affine group scheme over $k$ and $r \geq 2$. Then
\begin{enumerate}
    \item If $G$ is a finite \'etale group scheme, then there exist a smooth projective connected variety $X$ of dimension $r$ and $x \in X(k)$ such that $\pi^*(X, x) \cong G$ for any $* \in \{S, N, EN \}$.
    \item If $G$ is a finite group, then there exist a smooth projective connected variety $X$ of dimension $r$ and $x \in X(k)$ such that $\pi^*(X, x) \cong G$ for any $*\in
        \{F,EF,\acute e t,E\acute e t\}$.
\end{enumerate}    
\end{Theorem}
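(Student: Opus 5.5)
The plan is to combine the geometric Godeaux--Serre construction with the exact sequences of the first theorem. First, given a finite étale $k$-group scheme $G$ (for part (1)), or a finite constant group $G$ (for part (2)), I will build a faithful linear representation $G\to \mathrm{GL}(V)$ over $k$. Over a perfect field this is possible: descend a faithful permutation representation of $G(\bar k)$ together with its Galois action. I then take $W=V^{\oplus m}$ with $m\gg0$, so that the nonfree locus $Z\subset \PP(W)$ of the induced $G$-action has codimension greater than $r$. The locus $Z$ is the union over $1\neq g\in G(\bar k)$ of the projectivized eigenspaces of $g$; each of these has codimension growing linearly in $m$, since $g$ acts nontrivially on $V$.

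Next, I choose $G$-invariant hypersurfaces of large degree $d_1,\dots,d_{n-r}$, where $n=\dim\PP(W)$, and take a general complete intersection $Y$. By Bertini for the base-point-free invariant linear systems, $Y$ is smooth and connected of dimension $r$, is disjoint from $Z$, and is $G$-stable. Over an infinite field a general member works directly. Over a finite field I would instead use Poonen-type Bertini, or enlarge the degrees. The quotient $X=Y/G$ is then smooth and projective, and $f\colon Y\to X$ is a finite étale principal $G$-bundle. To obtain a rational point, I want a $k$-point of $Y$ in the free locus. This is the step where the construction may need care: if $Y(k)$ is empty, I can still try to pick $x\in X(k)$ whose fiber is a $G$-torsor. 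Since the first theorem only uses $y\in Y(k)$, however, I will arrange $Y$ to pass through a chosen $k$-point of the free locus, imposing this as a linear condition on the invariant sections. Such a point exists when $k$ is infinite, and over a finite field after raising $m$.

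The Tannakian step comes next. By the Lefschetz-type theorems for the $S$-fundamental group scheme cited in the paper, applied with sufficiently large degrees, $\pi^S(Y,y)=\pi^S(\PP(W),y)=1$, because $\Vect$ on projective space has only trivial numerically flat bundles. Since $\pi^S\twoheadrightarrow\pi^{EN}\twoheadrightarrow\pi^N$, these groups are trivial as well. In part (2), the étale, $F$ and extended variants are quotients of $\pi^S$, or are controlled by it through Frobenius-pullback and étale-trivialization arguments, so they are trivial too. Then Theorem~\ref{exact-certain-categories} for $*\in\{S,N,EN\}$, and Theorem~\ref{finite-etale-exact} for $*\in\{F,EF,\acute et,E\acute et\}$, give the exact sequences $1\to\pi^*(Y,y)\to\pi^*(X,x)\to G\to1$. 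With the left term trivial, this yields $\pi^*(X,x)\cong G$.

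I expect the main obstacle to be twofold: verifying triviality of $\pi^*(Y,y)$ for all the listed variants, and simultaneously ensuring the existence of the rational point together with connectedness and smoothness over an arbitrary perfect, possibly finite, field. For the former, I would first reduce everything to $\pi^S(Y)=1$ via the surjections between the Tannakian categories. For the latter, I would first try to handle finite $k$ by a Poonen Bertini theorem with a prescribed point.
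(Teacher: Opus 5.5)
Your proposal is correct and follows essentially the same route as the paper. The shared steps are: the Godeaux--Serre construction of a free, $G$-stable complete intersection $Y$; the Lefschetz theorem for $\pi^S$, giving $\pi^S(Y,y)=1$ and hence triviality of all the other variants, whose categories lie in $\cC^{NF}(Y)$; and the exact sequences of Theorems~\ref{exact-certain-categories} and~\ref{finite-etale-exact}. Forcing $Y$ through a $k$-point of the free locus fills a step the paper only asserts (``choose $y\in Y(k)$''); the one thing to tighten is that the codimension growth of the nonfree locus needs every $g\neq 1$ to act on $V$ by a non-scalar (i.e.\ $G\to\operatorname{PGL}(V)$ monic), not merely nontrivially, which your permutation representation does satisfy.
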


This extends the classical Godeaux--Serre theorem to several Tannakian
fundamental group schemes.  The realization problem beyond the finite
\'{e}tale cases treated here is left open.

Section~2 recalls the Tannakian preliminaries.  Section~3 proves the general
principal-bundle theorem.  Section~4 treats the fundamental group schemes
considered here, and Section~5 gives the Godeaux--Serre construction and the
realization theorem.

\section{Preliminaries}

Let $k$ be a field, $X$ a scheme over $k$, $G$ an affine group scheme over $k$, $\Qcoh(X)$ the category of quasi-coherent sheaves on $X$, $\Vect(X)$ the category of vector bundles on $X$.

\begin{Definition}
    Let $k$ be a field, $\mathcal{C},\mathcal{D}$ Tannakian categories over $k$, $\Phi:\mathcal{C}\rightarrow \mathcal{D}$ an exact tensor functor. The functor $\Phi$ is said to be \textit{observable} if for any $E\in\mathcal{C}$ and any subobject $L\hookrightarrow \Phi(E)\in\mathcal{D}$ of rank 1, there exists $F\in\mathcal{C}$ and integer $n>0$ such that $(L^{\otimes n})^{\vee}\hookrightarrow \Phi(F)$ is a subobject in $\mathcal{D}$.
\end{Definition}

\begin{Lemma}[{\cite[Proposition~A.3]{DaEs22}}]\label{observable}
Let $k$ be a field, $\mathcal{C},\mathcal{D}$  Tannakian categories over $k$, $\Phi:\mathcal{C}\rightarrow \mathcal{D}$ an exact tensor functor. 
Then the following conditions are equivalent:
    \begin{enumerate}
        \item The functor $\Phi$ is observable.
        \item For any $E\in\mathcal{C}$ and any quotient $\Phi(E)\twoheadrightarrow E'\in\mathcal{D}$, there exists $F\in \mathcal{C}$, such that $E'\hookrightarrow \Phi(F)\in\mathcal{D}$.
    \end{enumerate}
\end{Lemma}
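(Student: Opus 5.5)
The plan is to translate both conditions into statements about an affine group homomorphism $\rho\colon H\to K$ and show that each is equivalent to the quotient $K/\rho(H)$ being quasi-affine.

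\textbf{Step 1: reduce to groups.} Tannakian categories need not be neutral, so first pass to an algebraically closed extension of $k$, or to a fibre functor over some $k$-algebra. There the categories become neutral: $\mathcal{C}\simeq\Rep^f(K)$ and $\mathcal{D}\simeq\Rep^f(H)$. The exact tensor functor $\Phi$ becomes restriction along a homomorphism $\rho\colon H\to K$. Conditions (1) and (2) are both of the form ``there exists an object and an embedding''. The existence of such embeddings can be checked after faithfully flat base change. One descends an embedding by taking the relevant Hom-spaces, which are finite-dimensional $k$-vector spaces compatible with base change, and noting that being injective is an open condition there. So it suffices to treat the neutral case. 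Replacing $H$ by its image, which is closed, we may also assume $H\subseteq K$ is a closed subgroup.

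\textbf{Step 2: identify both conditions with a known criterion.} By Bialynicki-Birula--Hochschild--Mostow, the following are equivalent for a closed subgroup $H\subseteq K$:
\begin{enumerate}
\item[(a)] every finite-dimensional $H$-module embeds $H$-equivariantly into a $K$-module;
\item[(b)] every character $\chi$ of $H$ that occurs in some $K$-module has some power $\chi^{-n}$ that also occurs in a $K$-module;
\item[(c)] $K/H$ is quasi-affine.
\end{enumerate}
In the pro-algebraic case one passes to algebraic quotients of $K$. Condition (1) is a version of (b): a rank-one subobject $L\subseteq \Phi(E)$ is a character $\chi$ of $H$ occurring in $E|_H$, and we need $\chi^{-n}$ inside some $F|_H$.

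Condition (2) is weaker than (a), because it only asks to embed quotients of restricted modules. This difference is harmless. Every $H$-module $W$ embeds into $\mathcal{O}(K)\otimes W$. Taking the finite-dimensional part generated by $W$ and dualizing shows that every $H$-module is a subquotient of some $E|_H$, and in fact a quotient of a submodule of $E|_H$. One then reduces to quotients by using duals: a submodule of $E|_H$ is dual to a quotient of $E^\vee|_H$.

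\textbf{Step 3: prove the equivalence directly.} For (2)$\Rightarrow$(1): given $L\subseteq\Phi(E)$, the dual $L^\vee$ is a quotient of $\Phi(E^\vee)$. By (2), $L^\vee$ embeds into some $\Phi(F)$, which is (1) with $n=1$.

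For (1)$\Rightarrow$(2), use the Chevalley-type trick. Let $E'$ be a quotient of $\Phi(E)$ of dimension $d$. Then $\det(E')^\vee$ is a line inside $\Lambda^d(\Phi(E))^\vee=\Phi(\Lambda^d E^\vee)$. By (1), $(\det E')^{\otimes n}$ embeds into some $\Phi(F_0)$. Next, $E'$ embeds into $\Lambda^{d-1}E'^\vee\otimes\det E'$, and $\Lambda^{d-1}E'^\vee$ is a submodule of $\Phi(\Lambda^{d-1}E^\vee)$. Twisting by $(\det E')^{\otimes n}\otimes(\det E')^{\otimes(1-n)}$, we still have to absorb the factor $(\det E')^{\otimes(1-n)}$. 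For this, note that $\det E'$ is a quotient of $\Phi(\Lambda^d E)$. Hence its dual is a subobject of $\Phi(\Lambda^d E^\vee)$, and its positive powers are subobjects of $\Phi$ of tensor powers. Combining these embeddings into a single tensor product gives $E'\hookrightarrow\Phi(F)$.

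\textbf{Main obstacle.} I expect the hard part to be the bookkeeping of signs in the exponents, making sure every needed power of $\det E'$ can be placed inside a restricted object. The descent in the non-neutral case is also delicate. Given both, one can also simply cite Step 2.
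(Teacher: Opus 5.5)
Your proposal is correct. The paper gives no argument of its own here; it only cites \cite[Proposition~A.3]{DaEs22}. So your Step~3 is a self-contained proof where the paper relies on a reference.

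Step~3 stands on its own. For (2)$\Rightarrow$(1) you get $n=1$ from $\Phi(E^\vee)\cong\Phi(E)^\vee\twoheadrightarrow L^\vee$. For (1)$\Rightarrow$(2), the sign bookkeeping you worry about is harmless. Since $n\geq 1$, the leftover factor $(\det E')^{\otimes(1-n)}=((\det E')^\vee)^{\otimes(n-1)}$ is a nonnegative power of the subobject $(\det E')^\vee\subseteq\Phi(\Lambda^dE^\vee)$. Because $\otimes$ is exact in each variable, you obtain
\[
E'\cong\Lambda^{d-1}(E'^\vee)\otimes\det E'\hookrightarrow\Phi\bigl(\Lambda^{d-1}E^\vee\otimes F_0\otimes(\Lambda^dE^\vee)^{\otimes(n-1)}\bigr).
\]
Compared with appealing to the observable-subgroup criterion (your Step~2, i.e.\ quasi-affineness of $K/\rho(H)$), this route has two advantages. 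Every ingredient makes sense in an arbitrary, possibly non-neutral, Tannakian category in any characteristic: duals, exterior powers, exactness of $\otimes$, compatibility of $\Phi$ with these, and the canonical isomorphism $E'\cong\Lambda^{d-1}(E'^\vee)\otimes\det E'$, which can be checked after a fibre functor. It also produces an explicit $F$. What the citation offers in exchange is the link to the geometry of homogeneous spaces that the word ``observable'' refers to.

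You should drop Steps~1 and~2 rather than rely on them, because Step~1 is not sound as written, for three reasons.
\begin{enumerate}
\item A non-neutral $\mathcal C$ becomes $\Rep^f(K)$ only after Deligne's extension of scalars of the categories themselves. Choosing a fibre functor over $\bar k$ gives a groupoid, not a group over $k$.
\item An object $F$ produced over $\bar k$ need not come from $\mathcal C$. It would have to be descended, for instance through a finite subextension and restriction of scalars.
\item The argument that injectivity is an open condition on $\Hom(E',\Phi(F))$ yields a $k$-rational injective map only when $k$ is infinite. The paper uses the lemma over arbitrary fields, including finite ones.
\end{enumerate}
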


\begin{Definition}
    Let $k$ be a field, $X$ a connected scheme proper over $k$, $x\in X(k)$, $\mathcal{C}_X$ a Tannakian category over $X$ whose objects consist of vector bundles on $X$ with fibre functor $|_x:E\mapsto E|_x$, and denote its \textit{Tannaka group scheme} by $\pi(\mathcal{C}_X,x)$.
\end{Definition}

\begin{Definition}
    Let $k$ be a field, $X$ a connected scheme proper over $k$, $x\in X(k)$, $\mathcal{C}_X$ the Tannakian category over $X$. Define the \textit{saturation category} $\overline{\mathcal{C}}_X$ of $\mathcal{C}_X$ as the full subcategory of $\Vect(X)$ whose objects are those $E$ for which there exists a filtration
    $$0\hookrightarrow E_1 \hookrightarrow \cdots\hookrightarrow E_n=E,$$
    such that $E^i=E_{i+1}/E_i\in\mathcal{C}_X$ for any $i$.
\end{Definition}

Let $k$ be a field, $G$ an affine group scheme over $k$, $X$ a scheme proper over $k$. Denote by $\Rep_k^f(G)$ the finite- dimensional representation category of $G$ and by $\Rep_k(G)$ the representation category of $G$. The regular representation of $G$ on $k[G]$ is given by $(g\cdot f)(h)=f(gh)$ for any $g,h\in G$ and $f\in k[G]$. Given a principal $G$-bundle $\phi:P\rightarrow X$, we can define an exact tensor functor 
$$\begin{aligned}
    \eta_X^P:\Rep_k^f(G)\rightarrow \Qcoh(X), &V\mapsto (\cO_P\otimes_k V)^G, 
\end{aligned}$$
where the action is the diagonal action. Note that for any $V\in\Rep_k(G)$, $V$ is a union of its finite dimensional $G$-invariant subspaces. So for $V\in \Rep_k(G)$, we may define $$\eta_X^P(V):=\varinjlim\eta_X^P(V_\alpha),$$
where $V_\alpha$ takes over all finite dimensional $G$-invariant subspace of $V$. Conversely, given an exact tensor functor $\eta_X: \Rep_k(G)\rightarrow \Qcoh(X)$, one can define a principal $G$-bundle $\phi:P\rightarrow X$ by $P:=\Spec (\eta_X(k[G]))$. 

\begin{Lemma}[{\cite[Chapter~VIII, Proposition~10.2]{Mil12}}]\label{regularrepresentation}
    Let $k$ be a field, $G$ an affine group scheme over $k$, $(V, \rho)\in \Rep_k^f(G)$. Then $V$ embeds into a finite sum of copies of the regular representation, i.e. there exists an integer $n>0$ and a $G$-equivariant homomorphism $V\hookrightarrow k[G]^{\oplus n}$ where $k[G]$ is the regular representation.
\end{Lemma}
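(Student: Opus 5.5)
The plan is to use the coordinate description of $k[G]$ as a comodule. Let $\rho: V\to V\otimes_k k[G]$ be the comodule map of $(V,\rho)$, and write $\Delta: k[G]\to k[G]\otimes_k k[G]$ for the comultiplication and $\varepsilon$ for the counit. The regular representation corresponds to the comodule structure on $k[G]$ given by $\Delta$, with the convention $(g\cdot f)(h)=f(gh)$ made to match. I would then show that the orbit map embeds $V$ into $V_{\mathrm{triv}}\otimes_k k[G]\cong k[G]^{\oplus n}$, where $n=\dim V$ and $V_{\mathrm{triv}}$ denotes the underlying vector space with trivial action.

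\textbf{Step 1: the map is injective.} Consider $\rho: V\to V_{\mathrm{triv}}\otimes_k k[G]$. The counit axiom $(\mathrm{id}\otimes\varepsilon)\circ\rho=\mathrm{id}_V$ shows that $\rho$ has a left inverse, so it is injective.

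\textbf{Step 2: the map is equivariant.} The coassociativity axiom $(\rho\otimes\mathrm{id})\circ\rho=(\mathrm{id}\otimes\Delta)\circ\rho$ says exactly this. Here $V\otimes k[G]$ carries the comodule structure $\mathrm{id}_V\otimes\Delta$, which is the direct sum of $n$ copies of the regular comodule once we choose a basis $e_1,\dots,e_n$ of $V$. Writing $\rho(e_j)=\sum_i e_i\otimes a_{ij}$, the embedding becomes concretely $e_j\mapsto (a_{1j},\dots,a_{nj})\in k[G]^{\oplus n}$.

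\textbf{Step 3: matching conventions.} The only delicate point is checking that the action $(g\cdot f)(h)=f(gh)$ agrees with the comodule $\Delta$ rather than its opposite. Functorially, $\rho$ sends $v$ to the function $g\mapsto g v$. One then checks
\[
(g'\cdot \rho(v))(h)=\rho(v)(hg')=hg'v=\rho(g'v)(h),
\]
which requires the action $f\mapsto f(\,\cdot\, g')$, i.e.\ right translation. If the stated convention is the other one, I would instead use $v\mapsto (g\mapsto g^{-1}v)$, i.e.\ compose with the antipode, which works with left translation, or use $V^\vee$ suitably. Either way the result is an equivariant injection, with injectivity now given by evaluating at the identity. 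This bookkeeping is the main, though minor, obstacle. Everything else follows directly from the comodule axioms, and no finiteness of $G$ is needed beyond $\dim V<\infty$.
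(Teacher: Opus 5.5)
Your argument is correct and is the standard proof of the result the paper cites from Milne; the paper itself gives no proof beyond the citation. As in Milne, the coaction $\rho\colon V\to V_{\mathrm{triv}}\otimes_k k[G]\cong k[G]^{\oplus n}$ is injective by the counit axiom and is a comodule map by coassociativity.

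Your caution in Step 3 is warranted. The formula $(g\cdot f)(h)=f(gh)$ as written in the paper defines a right action, not a left one, and either of your fixes gives the required equivariant embedding: right translation with the orbit map, or left translation after composing with the antipode.
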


\begin{Theorem}[{\cite[Theorem A.1]{EHS07}}]\label{Thm1}
Let $L\xrightarrow{q}G\xrightarrow{p} A$ be a sequence of homomorphisms of affine group schemes over a field $k$. It induces a sequence of functors:
$$\Rep_k^f(A)\xrightarrow{p^*}\Rep_k^f(G)\xrightarrow{q^*}\Rep_k^f(L),$$
where $\Rep_k^f$ denotes the category of finite dimensional representations over $k$. Then we have the following:
\begin{enumerate}
    \item[(1)] The group homomorphism $p: G\rightarrow A$ is surjective $($faithfully flat$)$ iff  $p^*\Rep_k^f(A)$ is a full subcategory of $\Rep_k^f(G)$ and closed under taking subobjects.
    \item[(2)] The group homomorphism $q:L\rightarrow G$ is injective $($a closed immersion$)$ iff any object of $\Rep_k^f(L)$ is a subquotient of an object of the form $q^*(V)$ for some $V\in \Rep_k^f(G)$.
    \item[(3)] If $p$ is faithfully flat, then the sequence $L\xrightarrow{q}G\xrightarrow{p} A$ is exact iff the following conditions are fulfilled:
    \begin{enumerate}
        \item[(a)] For an object $V\in \Rep_k^f(G)$,  $q^* V\in\Rep_k^f(L)$ is trivial iff $V\cong p^*U$ for some $U\in\Rep_k^f(A)$.\label{3a}
        \item[(b)]\label{3b} Let $W_0$ be the maximal trivial subobject of $q^*V$ in $\Rep_k^f(L)$. Then there exists $V_0\hookrightarrow V$ in $\Rep_k^f(G)$ such that $q^*V_0\cong W_0$.
        \item[(c)] For any $W\in\Rep_k^f(G)$ and any quotient $q^*W\twoheadrightarrow W'\in\Rep_k^f(L)$, there exists $V\in \Rep_k^f(G)$  and an embedding $W'\hookrightarrow q^*V \in \Rep_k^f(L)$.
    \end{enumerate}
\end{enumerate}
\end{Theorem}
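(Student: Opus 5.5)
This is the Esnault--Hai--Sun criterion \cite[Theorem A.1]{EHS07}. I would prove it by Tannakian reconstruction. Write $k[G]$, $k[A]$, $k[L]$ for the Hopf algebras, so that $p$ and $q$ correspond to Hopf algebra maps $p^\#:k[A]\to k[G]$ and $q^\#:k[G]\to k[L]$. Every representation is the union of its finite-dimensional subrepresentations, and every coalgebra is the union of the finite-dimensional coefficient subcoalgebras of its comodules. The three parts then become statements about these coefficient spaces.

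\textbf{Part (1).} Faithful flatness of $p$ is equivalent to injectivity of $p^\#$ (Waterhouse, Takeuchi).

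For the forward direction, suppose $p^\#$ is injective. Then $k[A]$ is a subcoalgebra of $k[G]$. Let $W\subseteq p^*U$ be a $G$-stable subspace. Its coaction lands in $W\otimes k[G]\cap U\otimes k[A]=W\otimes k[A]$, so $W$ is an $A$-subcomodule. Applying the same argument to the graph of a $G$-map $p^*U\to p^*U'$, viewed as a subobject of $p^*(U\oplus U')$, gives fullness.

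For the converse, the coefficient coalgebra of $p^*U$ is the image of the coefficient coalgebra of $U$. Fullness together with closure under subobjects shows that $\Rep_k^f(A)$ is equivalent to its essential image. Reconstruction then gives that $k[A]$ is isomorphic to the subcoalgebra of $k[G]$ spanned by the coefficients of the $p^*U$, so $p^\#$ is injective.

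\textbf{Part (2).} Here $q$ is a closed immersion if and only if $q^\#$ is surjective.

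If $q^\#$ is surjective, take $W\in\Rep_k^f(L)$. Then $W$ embeds into $k[L]^{\oplus n}$ by Lemma~\ref{regularrepresentation}. The finite-dimensional subcoalgebra of $k[L]$ it generates is the image of a finite-dimensional subcoalgebra $C\subseteq k[G]$. Consequently $W$ is a subquotient of $q^*(C^{\oplus n})$.

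Conversely, suppose every $W$ is a subquotient of some $q^*V$. The coefficients of $W$ then lie in the coefficients of $q^*V$, which is $q^\#$ of the coefficients of $V$. Since every element of $k[L]$ is a coefficient of some $W$, it follows that $q^\#$ is surjective.

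\textbf{Part (3).} Exactness means that $q$ factors through $K=\ker p$ and that $L\to K$ is faithfully flat. I would translate this into conditions (a)--(c) as follows.

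First, $q$ factors through $K$ if and only if $q^*p^*U$ is trivial for all $U$, which is the ``if'' direction of (a).

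Next, I identify $\Rep(K)$ with $G$-representations modulo $A$. For a normal subgroup $K$, the invariants $V^K$ form a $G$-subrepresentation, and it descends to $A$ because $G/K\cong A$. The image of $L\to K$ equals $K$ exactly when $K$-invariants and $L$-invariants agree on $G$-representations, i.e. when $V^L=V^K$. Granting (b), the maximal $L$-trivial subobject is $V^L=V_0$. By (a), $V_0$ comes from $A$, hence $V_0$ is $K$-trivial, so $V^L=V^K$. Condition (c) is the observability statement of Lemma~\ref{observable} in group form. Because $L$ is not a priori normal, it is what guarantees that $L$ acts through a normal subgroup, i.e. that the image of $L$ is normal in $G$ (Chevalley: a subgroup $H$ is normal iff every $H$-quotient of a restricted representation embeds into a restricted one). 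Combining these, the image of $L$ is a normal subgroup $H\subseteq K$. The group $G/H$ has representations exactly the $V$ with $q^*V$ trivial, which by (a) are the $p^*U$. Hence $G/H\cong A$ and $H=K$.

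For the converse, assume exactness. Then (a) and (b) follow by taking $V^K$, and (c) follows from normality of $L$'s image via the same Chevalley characterization.

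\textbf{Expected obstacle.} The hardest step is the use of (c) to establish normality of the image of $L$, equivalently the ``observable'' characterization of normal subgroups. I would first try to prove it via the quotient $G/H$ being affine (so that $H$ is observable) and then use conjugation invariance on the coefficient ideal $\ker(k[G]\to k[H])$.
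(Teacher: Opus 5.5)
The paper gives no proof of this statement; it quotes it from \cite{EHS07}. Your parts (1) and (2) work as coefficient-coalgebra sketches. In (2) the coefficients of $W$ are only \emph{contained} in $q^\#(C)$, not equal to it, but that suffices. The direction ``exact $\Rightarrow$ (a), (b), (c)'' of (3) is also fine.

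The gap is in the other direction of (3). You get normality of $H=\operatorname{im}(q)$ from (c) alone, using the principle that $H$ is normal iff every $H$-quotient of a restricted representation embeds into a restricted one. That principle is false. Using (1) for $L\twoheadrightarrow H$ and (2) for $H\hookrightarrow G$, condition (c) says exactly that every $H$-module embeds into some $V|_H$, i.e.\ that $H$ is \emph{observable}. This is much weaker than normality.

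Concretely, take $G=S_3$ (constant), $A=1$ and $L=H=\langle(12)\rangle$.
\begin{itemize}
\item Every $H$-module $W'$ embeds $H$-equivariantly into $\operatorname{Ind}_H^{G}W'$, so (c) holds.
\item An $S_3$-module on which $(12)$ acts trivially is trivial, because the kernel of the action is normal and contains a transposition. So (a) holds.
\item Yet $H$ is not normal, and $L\to G\to 1$ is not exact.
\end{itemize}
Your closing step (the $V$ with $q^*V$ trivial are, by (a), the $p^*U$, hence $G/H\cong A$) uses only (a) and normality. So your argument as written would prove exactness in this example. Likewise, the conjugation invariance you hope for in your last paragraph cannot come from (c) or from observability: here $G/H$ is finite, hence affine, and $H$ is still not normal.

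What is missing is that (b) has to enter the normality argument, not only the identity $V^L=V^K$. Condition (b) says that $V^H=V^L$ is a $G$-subrepresentation of every $V\in\Rep_k^f(G)$. This is exactly what fails above: in the permutation module $k^3$, $V^H=\langle e_1+e_2,e_3\rangle$ is not $S_3$-stable.

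For an observable $H$, the subgroup of $G$ acting trivially on $V^H$ for every $V$ is $H$ itself. To see this:
\begin{itemize}
\item By Chevalley, $H$ is the common stabilizer of certain $H$-stable subspaces $W\subseteq V$.
\item Using observability, embed the inverse of the character of $H$ on $\wedge^{\dim W}W$ into some $V'|_H$. This gives an $H$-invariant vector in $\wedge^{\dim W}V\otimes V'$.
\item Anything fixing that vector stabilizes the Pl\"ucker line, hence stabilizes $W$.
\end{itemize}
If every $V^H$ is $G$-stable, that subgroup equals $\bigcap_V\ker\bigl(G\to\operatorname{GL}(V^H)\bigr)$, an intersection of normal subgroups. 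So $H$ is normal, and your final step then gives $H=K$.

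Alternatively, you can bypass normality. You already derived $V^H=V^K$ for all $V$ from (a) and (b). Applying the above description to $H$ and to the normal (hence observable) subgroup $K$ then gives $H=K$ directly.
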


\section{Tannaka group schemes of principal bundles}
\begin{Proposition}\label{fully faith}
Let $k$ be a field, $G$ an affine group scheme over $k$, $X$ a connected $k$-scheme,
$f:Y\rightarrow X $ a principal $G$-bundle. Then the tensor functor
\[
        \eta^Y_X:\Rep^f_k(G) \rightarrow \Vect(X),
        \quad
        V\mapsto (\cO_Y\otimes_k V)^G
\]
is fully faithful if and only if $H^0(Y,\cO_Y)=k$.
\end{Proposition}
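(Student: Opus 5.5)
The key observation is that every Hom-space in the image of $\eta^Y_X$ is computed by global sections on $Y$. For $V,W\in\Rep^f_k(G)$, the functor $\eta^Y_X$ is a tensor functor compatible with duals, since $Y\to X$ is fpqc locally trivial and $\eta^Y_X$ is exact. This gives
\[
\Hom_X(\eta^Y_X(V),\eta^Y_X(W))=H^0\bigl(X,\eta^Y_X(V^\vee\otimes W)\bigr).
\]
By definition $\eta^Y_X(U)=(f_*\cO_Y\otimes_k U)^G$, so
\[
H^0\bigl(X,\eta^Y_X(U)\bigr)=\bigl(H^0(Y,\cO_Y)\otimes_k U\bigr)^G,
\]
because taking $G$-invariants (an equalizer of the coaction maps) commutes with $H^0$, and $\otimes_k U$ commutes with $H^0$ since $U$ is finite dimensional. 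On the other side,
\[
\Hom_G(V,W)=(V^\vee\otimes W)^G=(k\otimes_k U)^G,
\]
where $U=V^\vee\otimes W$. Under these identifications, the map induced by $\eta^Y_X$ on Hom-spaces is the map $(k\otimes U)^G\to (H^0(Y,\cO_Y)\otimes U)^G$ induced by the structure inclusion $k\hookrightarrow H^0(Y,\cO_Y)$, which is $G$-equivariant. I would check this compatibility by a local computation on a trivializing fpqc cover.

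For ``if'': when $H^0(Y,\cO_Y)=k$, the map above is the identity for every $U$, so $\eta^Y_X$ is fully faithful.

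For ``only if'': take $V=W=k$ trivial. Then $\eta^Y_X(k)=\cO_X$, and full faithfulness gives $H^0(X,\cO_X)=\Hom(\cO_X,\cO_X)=\Hom_G(k,k)=k$. This is not enough, because we need all of $A:=H^0(Y,\cO_Y)$, not just $A^G=H^0(X,\cO_X)$. Here $A$ is a $G$-representation (a comodule over $k[G]$), hence a union of finite-dimensional subrepresentations. Suppose $A\neq k$. Choose a finite-dimensional subrepresentation $U'\subset A$ with $U'\supsetneq k$, and then pick a nonzero $G$-map $U\to A$ whose image is not contained in $k\cdot 1$, for some finite-dimensional $U$. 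By adjunction such a map is a $G$-invariant element of $A\otimes U^\vee$ that does not come from $k\otimes U^\vee$. Concretely, take $U=U'$ with the inclusion map: it gives an element of $(A\otimes U'^\vee)^G$. If it came from $(k\otimes U'^\vee)^G$, the inclusion would land in $k$, a contradiction. Setting $V=k$ and $W=U'^\vee$ in the Hom identification then shows that $\eta^Y_X$ is not full.

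The main obstacle is making the identification $\Hom_X(\eta(V),\eta(W))=(A\otimes V^\vee\otimes W)^G$ rigorous. This needs the fact that $f_*\cO_Y\otimes V$ with its diagonal coaction has invariants equal to $\eta(V)$, and that invariants commute with $H^0$ for a possibly non-finite affine morphism $f$. I expect both to follow from left exactness of $H^0$ applied to the equalizer sequence
\[
0\to(\,\cdot\,)^G\to M\rightrightarrows M\otimes k[G],
\]
using that $H^0(Y,\cO_Y\otimes k[G])=A\otimes k[G]$ by flat base change, or directly because $Y\times G$ is affine over $Y$.
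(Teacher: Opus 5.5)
Your proposal is correct and follows essentially the same route as the paper. Both identify $\Hom_X(\eta^Y_X(V),\eta^Y_X(W))$ with $(A\otimes_k\Hom_k(V,W))^G$, note that this is $\Hom_G(V,W)$ when $A=k$, and for the converse test the inclusion of a finite-dimensional subrepresentation $U\subset A$ containing $1_A$ against the trivial representation. The differences are cosmetic: the paper gets the Hom identification from the descent equivalence $\Vect(X)\simeq\Vect^G(Y)$ rather than by commuting invariants with $H^0$, and it uses the pair $(V,W)=(U,k)$ where you use $(k,U^\vee)$.
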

\begin{proof}
For any $V, W \in \operatorname{Rep}^f_k(G)$, take $A :=H^0(Y, \cO_Y)$.
Since $f:Y\to X$ is a principal $G$-bundle, the fpqc
descent induces a tensor equivalence
$f^*:\Vect(X)\xrightarrow{\sim}\Vect^G(Y)$ (\cite[Theorem~4.46]{Vis05}  and \cite[\S 2.1]{AOV08}), hence we have 
\[
\begin{aligned}
\Hom_{\Vect(X)}\bigl(\eta^Y_X(V),\eta^Y_X(W)\bigr)
&\cong
\Hom_{\Vect^G(Y)}
\bigl(f^*\eta^Y_X(V),f^*\eta^Y_X(W)\bigr)
\cong
\Hom_{\Vect^G(Y)}
\bigl(\cO_Y\otimes_k V,\cO_Y\otimes_k W\bigr)
\\
&\cong
\Hom_{\Vect(Y)}
\bigl(\cO_Y\otimes_k V,\cO_Y\otimes_k W\bigr)^G
\cong
\bigl(A\otimes_k V^\vee\otimes_k W\bigr)^G
\cong
\bigl(A\otimes_k\Hom_k(V,W)\bigr)^G.
\end{aligned}
\]

If $A=k$, then the $G$-action on $k$ is trivial, and
 $\Hom_{\Vect(X)}(\eta^Y_X(V), \eta^Y_X(W)) \cong \bigl(A\otimes_k \Hom_k(V,W)\bigr)^G \cong \Hom_k(V, W)^G \cong \Hom_G(V, W)$.
Hence $\eta_X^Y$ is fully faithful.

Conversely, suppose that $\eta_X^Y$ is fully faithful. Let $a\in A$, since $A$ is a  $G$-module,  there exists a
finite-dimensional $G$-submodule $U\subset A$ containing $a$.
Since $k\cdot 1_A\subset A$ is a trivial $G$-submodule, after
replacing $U$ by $U+k\cdot 1_A$, we may assume that
$1_A,a\in U$.
Let $k$ be a trivial $G$-representation. We have the isomorphism
$\Hom_G(U,k) \cong \Hom_{\Vect(X)}\bigl(\eta^Y_X(U),\eta^Y_X(k)\bigr)$ by fully faithfulness, and the isomorphism
\[
\begin{aligned}
\Hom_G(U,k) &\cong  \Hom_{\Vect(X)}\bigl(\eta^Y_X(U),\eta^Y_X(k)\bigr)
\cong
\Hom_{\Vect^G(Y)}
\bigl(\cO_Y\otimes_k U,\cO_Y\otimes_k k\bigr)
\\
&\cong
\Hom_{\Vect(Y)}
\bigl(\cO_Y\otimes_k U,\cO_Y\otimes_k k\bigr)^G
\cong
\bigl(A\otimes_k U^\vee\otimes_k k\bigr)^G \cong \Hom_G(U, A).
\end{aligned}
\]
is given by $\varphi\mapsto \bigl(u\mapsto \varphi(u)1_A\bigr)$. Let $\iota: U \rightarrow A \in \Hom_G(U, A)$ be the inclusion,
by the surjectivity, there exists $\varphi\in\Hom_G(U,k)$ such that
$\iota(u)=\varphi(u)1_A$
for every $u\in U$, hence $U\subseteq k\cdot 1_A$.
Since $1_A\in U$, we have $U=k\cdot 1_A$, and hence
$a\in k\cdot 1_A$. Since $a\in A$ was arbitrary, we have  $A=k$.
\end{proof}

\begin{Proposition}\label{regular-representation-inclusion}
Let $k$ be a field, $G$ an affine group scheme over $k$, $X$ a connected scheme over $k$,
$f:Y\rightarrow X$ a principal $G$-bundle, $\cC_X\subset \Vect(X)$
the Tannakian category on $X$. Suppose there exists
$\{W_i\}_{i\in I}$ a filtered family of
finite-dimensional $G$-subrepresentations of the regular
representation $k[G]$ such that
$k[G]=\varinjlim_{i\in I}W_i$.
If $\eta_X^Y(W_i)\in\cC_X$
for every $i \in I$, then $\eta_X^Y\bigl(\Rep_k^f(G)\bigr)\subseteq\cC_X$. In particular, if $G$ is a finite group scheme over $k$ and $f_*\cO_Y \in \cC_X$, then $\eta_X^Y\bigl(\Rep_k^f(G)\bigr)\subseteq\cC_X$.
\end{Proposition}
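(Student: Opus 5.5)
The plan is to use Lemma~\ref{regularrepresentation} together with the exactness of $\eta_X^Y$ and the closure properties of the Tannakian category $\cC_X\subset\Vect(X)$. Here we take $\cC_X$ to be a full abelian tensor subcategory, so closed under finite direct sums, subobjects and quotients in $\cC_X$.

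Let $V\in\Rep_k^f(G)$. By Lemma~\ref{regularrepresentation} there is a $G$-equivariant embedding $V\hookrightarrow k[G]^{\oplus n}$. Since $V$ is finite dimensional and $k[G]^{\oplus n}=\varinjlim_i W_i^{\oplus n}$ is a filtered union, the image of $V$ lies in some $W_i^{\oplus n}$. Hence $V\hookrightarrow W_i^{\oplus n}$ is an injection in $\Rep_k^f(G)$. Applying the exact functor $\eta_X^Y$ gives a monomorphism
\[
\eta_X^Y(V)\hookrightarrow \eta_X^Y(W_i)^{\oplus n},
\]
and the target lies in $\cC_X$.

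The key point, and the main obstacle, is that a subsheaf of an object of $\cC_X$ need not lie in $\cC_X$. To handle this, I would exhibit $\eta_X^Y(V)$ as a kernel of a morphism inside $\cC_X$. Take the cokernel $Q=W_i^{\oplus n}/V$. It is a finite-dimensional representation, so it again embeds into some $W_j^{\oplus m}$ by the same argument. This gives an exact sequence
\[
0\to V\to W_i^{\oplus n}\to W_j^{\oplus m}
\]
in $\Rep_k^f(G)$. By exactness of $\eta_X^Y$,
\[
\eta_X^Y(V)=\ker\bigl(\eta_X^Y(W_i)^{\oplus n}\to\eta_X^Y(W_j)^{\oplus m}\bigr).
\]
This is a morphism between objects of $\cC_X$, and it lies in $\cC_X$ because $\cC_X$ is full in $\Vect(X)$. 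Its kernel in $\cC_X$ agrees with the kernel in $\Qcoh(X)$, since $\cC_X$ is an abelian subcategory whose exact structure is that of sheaves (the intended meaning of ``Tannakian category on $X$''). Therefore $\eta_X^Y(V)\in\cC_X$.

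For the final assertion, assume $G$ is finite. Then $k[G]$ is itself finite dimensional, so we may take the constant family $W_i=k[G]$. It remains to identify
\[
\eta_X^Y(k[G])=(\cO_Y\otimes_k k[G])^G\cong f_*\cO_Y.
\]
This is the standard fact that $P=\Spec\bigl(\eta_X(k[G])\bigr)$ recovers the torsor; alternatively, after pulling back along $f$, the module $\cO_Y\otimes k[G]$ with its diagonal action is isomorphic to the torsor algebra, via the untwisting isomorphism $Y\times G\cong Y\times_X Y$. Since $f_*\cO_Y\in\cC_X$ by hypothesis, the first part applies and gives $\eta_X^Y\bigl(\Rep_k^f(G)\bigr)\subseteq\cC_X$.
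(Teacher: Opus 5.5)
Your proof is correct and follows essentially the paper's argument: embed $V$ into $W_\alpha^{\oplus n}$, embed the cokernel into $W_\beta^{\oplus m}$, and use exactness of $\eta_X^Y$ to exhibit $\eta_X^Y(V)$ as the kernel of a morphism in $\cC_X$, with the finite case handled by taking $W=k[G]$ and $\eta_X^Y(k[G])\cong f_*\cO_Y$. You state explicitly that kernels in $\cC_X$ must agree with sheaf-theoretic kernels; the paper relies on this assumption implicitly at the same step.
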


\begin{proof}
Let $V\in\Rep_k^f(G)$. By
Lemma~\ref{regularrepresentation}, there exists a
$G$-equivariant monomorphism
$\varphi:V\hookrightarrow k[G]^{\oplus n}$
for some $n>0$. Since $V$ is finite-dimensional and
$k[G]=\varinjlim_{\i \in I} W_i$, there exists
$\alpha\in I$ such that $\varphi$ factors as a monomorphism
$\varphi_\alpha:V\hookrightarrow W_\alpha^{\oplus n} \subseteq k[G]^{\oplus n}.$
Let $Q:=\coker(\varphi_\alpha)$
in $\Rep_k^f(G)$. Applying
Lemma~\ref{regularrepresentation} to $Q$, we obtain a
$G$-equivariant monomorphism
$\psi:Q\hookrightarrow k[G]^{\oplus m}$
for some $m>0$. There exists
$\beta\in I$ such that
 $\psi$ factors through a $G$-equivariant monomorphism
$\psi_\beta:Q\hookrightarrow W_\beta^{\oplus m} \subseteq k[G]^{\oplus m}$.
Then there exists an exact sequence of $G$-modules $ 0\rightarrow V
      \xrightarrow{\varphi_\alpha}W_\alpha^{\oplus n}
      \xrightarrow{u}W_\beta^{\oplus m}$.
Since $\eta_X^Y$ is exact, we get an exact sequence
\[
    0\rightarrow\eta_X^Y(V)
    \xrightarrow{\eta^Y_X(\varphi_\alpha)}\eta_X^Y(W_\alpha)^{\oplus n}
      \xrightarrow{\eta_X^Y(u)}
      \eta_X^Y(W_\beta)^{\oplus m}.
\]
Hence 
$   \eta_X^Y(V)
    \simeq
    \ker\bigl(\eta_X^Y(u)\bigr)
    \in\cC_X$.
This proves that
$\eta_X^Y\bigl(\Rep_k^f(G)\bigr)\subseteq\cC_X$.

If $G$ is a finite group scheme, take $W_\alpha = k[G]$, then $\eta^Y_X(k[G])= f_*\cO_Y$, we have $\eta_X^Y\bigl(\Rep_k^f(G)\bigr)\subseteq\cC_X$.
\end{proof}

\begin{Theorem}\label{thm:general-exact-sequence}
Let $k$ be a field, $G$ an affine group scheme over $k$, $X$ a connected scheme over $k$,
$f:Y\rightarrow X$ a principal $G$-bundle, $\cC_X\subset \Vect(X)$ and $\cC_Y \subset \Vect(Y)$
 the Tannakian categories on $X$ and $Y$ respectively, such that $f^*\cC_X\subseteq \cC_Y$, $y\in Y(k)$ a rational point lying over $x \in X(k)$. 
Then:
\begin{enumerate}
 \item If $\eta_X^Y\bigl(\Rep^f_k(G)\bigr)\subseteq \cC_X$, then the natural homomorphism $ 
 \pi(\cC_X,x) \to G$ induced by the Tannakian dual of the  functor $\eta_X^Y:\Rep^f_k(G) \to \cC_X$ is faithfully flat. 
\item If $\eta_X^Y\bigl(\Rep^f_k(G)\bigr)\subseteq \cC_X$, then $f^*:\cC_X\to \cC_Y$ is observable if and only if the natural sequence of affine group schemes over $k$ is exact:
\[
\pi(\cC_Y, y)\rightarrow \pi(\cC_X, x)\rightarrow G \rightarrow 1.
\]
\item If $f_*\cC_Y \subseteq \cC_X$,   then $G$ is a finite group scheme, $\eta_X^Y\bigl(\Rep^f_k(G)\bigr)\subseteq \cC_X$, $f^*:\cC_X\to \cC_Y$ is observable, and the natural homomorphism $\pi(\cC_Y,y)\rightarrow \pi(\cC_X,x)$ is a closed immersion.
In particular, the natural sequence of affine group schemes over $k$ is exact:
\[
        1\rightarrow \pi(\cC_Y,y)
        \rightarrow \pi(\cC_X,x)
        \rightarrow G
        \rightarrow 1.
\]  
\end{enumerate}
\end{Theorem}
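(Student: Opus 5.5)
The plan is to translate each claim into the Tannakian criteria of Theorem~\ref{Thm1}, applied to the sequence $\pi(\cC_Y,y)\xrightarrow{q}\pi(\cC_X,x)\xrightarrow{p}G$. Here $p$ is dual to $\eta_X^Y:\Rep_k^f(G)\to\cC_X$, which is compatible with the fibre functors because $\eta_X^Y(V)|_x\cong V$ via the trivialisation of $f^*\eta_X^Y(V)\cong\cO_Y\otimes_k V$ at $y$. The map $q$ is dual to $f^*:\cC_X\to\cC_Y$.

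\textbf{Part (1).} By Theorem~\ref{Thm1}(1), I must show two things: the essential image of $\eta_X^Y$ is full, and it is closed under subobjects in $\cC_X$. For fullness I will use descent, as in the proof of Proposition~\ref{fully faith}. This gives
\[
\Hom(\eta_X^Y V,\eta_X^Y W)\cong \bigl(H^0(Y,\cO_Y)\otimes_k\Hom_k(V,W)\bigr)^G,
\]
so it suffices that $H^0(Y,\cO_Y)=k$. I expect this to be the subtle point, since it is not literally among the hypotheses. My plan is to derive it from $\cO_X\subset\eta_X^Y(k[G]_i)\in\cC_X$ and $\Hom_{\cC_X}(\cO_X,\cO_X)=k$. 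Concretely, invariants of $A$ in finite subrepresentations correspond to global sections of objects of $\cC_X$. A cleaner route is to use the $G$-equivariant section through $y$ and evaluation at $x$: any $G$-map $V\to A\otimes W$ evaluated at $y$ gives a $G$-map $V\to W$. This gives injectivity of $\Hom_G(V,W)\to\Hom(\eta V,\eta W)$, while surjectivity uses that $\Hom$ in $\cC_X$ is detected on fibres through the fibre functor. For closure under subobjects, take $E\subset\eta_X^Y(V)$ in $\cC_X$. Then $f^*E\subset\cO_Y\otimes V$ is a $G$-equivariant subbundle whose fibre at $y$ is a subspace $V'\subset V$. I will show $V'$ is $G$-stable and $f^*E=\cO_Y\otimes V'$. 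Both follow by applying fullness to $\wedge^{\mathrm{rk}E}$: $\det E\subset\eta(\wedge^r V)$, and a line subobject of $\eta(U)$ in $\cC_X$ corresponds to a $G$-stable line of $U$.

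\textbf{Part (2).} I will check conditions (a), (b), (c) of Theorem~\ref{Thm1}(3). For (a), if $f^*E$ is trivial in $\cC_Y$ with $E\in\cC_X$, then $E$ descends from a trivial bundle with $G$-action $\cO_Y\otimes_k E|_y$, so $E\cong\eta_X^Y(E|_y)$. Conversely, $f^*\eta_X^Y(V)\cong\cO_Y\otimes V$ is trivial. For (b), the maximal trivial subobject $H^0(Y,f^*E)\otimes\cO_Y$ of $f^*E$ is $G$-stable, so it descends to a subobject $E_0\subset E$ in $\cC_X$, using that $\cC_X$ is closed under kernels (a saturation argument). Condition (c) is exactly Lemma~\ref{observable}(2) for $f^*$. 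Hence exactness is equivalent to observability.

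\textbf{Part (3).} First, $f_*\cO_Y\in\cC_X$ is a vector bundle, so $f$ is finite and $G$ is finite. Proposition~\ref{regular-representation-inclusion} then gives $\eta_X^Y(\Rep_k^f(G))\subseteq\cC_X$. For observability, given a quotient $f^*E\twoheadrightarrow E'$ in $\cC_Y$, I will use the adjunction unit $E'\hookrightarrow f^*f_*E'$; it is injective because $f$ is faithfully flat and $f^*f_*E'\cong E'\otimes_k k[G]$. Setting $F=f_*E'\in\cC_X$ then works. For the closed immersion I use Theorem~\ref{Thm1}(2): every $E'\in\cC_Y$ is a subobject of $f^*f_*E'$, with $f_*E'\in\cC_X$. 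Exactness on the left then follows by combining this with part (2).
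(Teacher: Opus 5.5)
Your overall architecture matches the paper's: Theorem~\ref{Thm1}(1) for part (1), conditions (a)--(c) for part (2), and finiteness, Proposition~\ref{regular-representation-inclusion} and Theorem~\ref{Thm1}(2) for part (3). Part (2) is fine once part (1) is in place. But part (1) has a genuine gap: you never use that $\cC_Y$ is a Tannakian category on $Y$, and that is where the needed input comes from.

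The equality $H^0(Y,\cO_Y)=k$ is effectively a hypothesis. Since $\cO_Y=f^*\cO_X$ is the unit of $\cC_Y$, one has $H^0(Y,\cO_Y)=\mathrm{End}_{\cC_Y}(\cO_Y)=k$; this is the paper's one-line argument. Your substitutes cannot work:
\begin{itemize}
\item Nothing on the $\cC_X$ side suffices. Take the trivial torsor $Y=G\times X$ with $G\neq 1$ finite constant. Then $\eta_X^Y(V)=\cO_X\otimes_k V$ lies in every $\cC_X$ and $\mathrm{End}_{\cC_X}(\cO_X)=k$. Yet $H^0(Y,\cO_Y)=k[G]$, $\eta_X^Y$ is not full, and $\pi(\cC_X,x)\to G$ is trivial. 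What excludes this example is precisely the Tannakian $\cC_Y$.
\item Evaluation at $y$ is not $G$-equivariant, since the action is free.
\item Faithfulness of the fibre functor only gives injectivity on $\Hom$'s, not that $\phi|_x$ is $G$-linear.
\end{itemize}

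The same omission breaks closure under subobjects. The determinant trick reduces you to a line $L\subset\eta_X^Y(U)$ in $\cC_X$, but fullness says nothing about $L$, which is not known to be in the essential image. Compare the upper triangular $B\subset\mathrm{SL}_2$: restriction $\Rep^f_k(\mathrm{SL}_2)\to\Rep^f_k(B)$ is fully faithful, yet $ke_1$ is $B$-stable and not $\mathrm{SL}_2$-stable. The missing step is Tannakian on $Y$:
\begin{itemize}
\item $f^*E\hookrightarrow\cO_Y\otimes_k V$ is a subobject in $\cC_Y$ of a trivial object, hence trivial.
\item So $f^*E=\cO_Y\otimes_k V'$ with $V'=H^0(Y,f^*E)\subseteq V$, and $V'$ is $G$-stable by equivariance.
\item Then $E\cong\eta_X^Y(V')$ by descent.
\end{itemize}
This is the paper's argument via $\cC_{Y/X}$, and it makes the determinant reduction unnecessary.

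Part (3) also contains an incorrect step, though it is easily repaired. For $f^*\dashv f_*$ the unit lives on $X$; the natural map on $Y$ is the counit $f^*f_*E'\twoheadrightarrow E'$, surjective since $f$ is affine. Your identification $f^*f_*E'\cong E'\otimes_k k[G]$ is false. Base change gives $f^*f_*E'\cong p_*\alpha^*E'$, where $p,\alpha\colon G\times Y\to Y$ are the projection and the action. For constant $G$ this is $\bigoplus_{g\in G}g^*E'$, which differs from $E'^{\oplus |G|}$ whenever some $g^*E'\not\cong E'$.

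For the closed immersion, use the counit directly, since Theorem~\ref{Thm1}(2) only asks for subquotients; this is what the paper does. For observability, apply the counit to $E'^{\vee}$ and dualize. This gives $E'\hookrightarrow f^*\bigl((f_*E'^{\vee})^{\vee}\bigr)$ with $(f_*E'^{\vee})^{\vee}\in\cC_X$.
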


\begin{proof}
Consider the Cartesian diagram
\[
\begin{tikzcd}
G \times Y \arrow[r,"\alpha"] \arrow[d,"\operatorname{pr}_2"'] & Y \arrow[d,"f"]\\
Y \arrow[r,"f"'] & X .
\end{tikzcd}
\]
where $\alpha:G \times Y \rightarrow Y$ is the $G$-action and 
$\operatorname{pr}_2:G \times Y \rightarrow Y$ is the second projection.

(1)  Suppose $\eta_X^Y\bigl(\Rep^f_k(G)\bigr)\subseteq \cC_X$. The existence of Tannakian categories $\cC_Y$ on $Y$ guarantees that $H^0(Y,\cO_Y)=k$. By Proposition~\ref{fully faith}, $\eta_X^Y: \Rep^f_k(G) \rightarrow \cC_X$ is fully faithful.

Denote $\cC_{Y/X}:= \bigl\{ V\in \cC_X \bigm| f^*V\text{ is trivial in }\cC_Y \bigr\} $. Let $\mathcal V \in\cC_{Y/X}$. 
Since $f^*\mathcal V$ is trivial in $\cC_Y$, we have 
$f^*\mathcal V \cong H^0(Y, f^*\mathcal V) \otimes_k \cO_Y$. The $G$-linearization 
$\alpha^*f^*V\cong \operatorname{pr}_2^*f^*\mathcal V$ induces a $G$-action on $ H^0(Y, f^*\mathcal V)$,
hence we obtain a functor
\[
    \tau_X^Y:\cC_{Y/X}\rightarrow\Rep_k^f(G),
    \quad
    \mathcal V\mapsto H^0(Y,f^*\mathcal V).
\]
 For any  $V \in \Rep^f_k(G)$,  we have $\tau_X^Y\eta_X^Y(V) \cong \tau_X^Y((\cO_Y\otimes_k V)^G) \cong H^0(Y,\cO_Y \otimes_k V) \cong V$. Conversely, for any $\mathcal V \in \cC_{Y/X}$, we have that $f^*\mathcal V$ is trivial in $\cC_Y$ and $\eta_X^Y\tau_X^Y(\mathcal V) \cong \eta_X^Y(H^0(Y,f^*\mathcal V)) \cong (\cO_Y \otimes_k H^0(Y,f^*\mathcal V))^G  \cong (f^*\mathcal V)^G \cong \mathcal V$. Hence $\eta^Y_X:\Rep^f_k(G) \rightarrow \cC_{Y/X}$ defines an equivalence of categories with inverse functor $\tau_X^Y:\cC_{Y/X}\rightarrow\Rep_k^f(G)$.

Let $\mathcal V \in \cC_{Y/X}$ and $\mathcal W$ a subobject of $\mathcal V$ in $\cC_X$, then $f^*\mathcal W$ is also a subobject of $f^*\mathcal V$ in $\cC_Y$. Since subobject of a trivial object in $\cC_Y$ is also trivial, we have $f^*\mathcal W$ is trivial in $\cC_Y$, i.e. $\mathcal W \in \cC_{Y/X}$. Thus $\cC_{Y/X}$ is closed under taking subobjects in $\cC_X$. Hence $\cC_{Y/X}$ is the Tannakian subcategory of $\cC_X$. This implies the natural homomorphism $ 
 \pi(\cC_X,x) \to G$ induced by the Tannakian dual of the  functor $\eta_X^Y:\Rep^f_k(G) \to \cC_X$ is faithfully flat. 

(2) Since $\eta_X^Y\bigl(\Rep^f_k(G)\bigr)\subseteq \cC_X$,  the homomorphism $\pi(\cC_X, x) \to G$ is faithfully flat by (1).
 
Suppose $f^*:\cC_X\to \cC_Y$ is observable. We check the conditions~${\rm (a)}$ to ${\rm (c)}$ of (3) in Theorem~\ref{Thm1}.
Let $\mathcal V \in \cC_X$ such that $f^*\mathcal V$ is trivial in $\cC_Y$, then there exists  $ V \in \Rep^f_k(G)$ such that $\eta^Y_X(V) \cong \mathcal V$. Thus   condition ${\rm (a)}$ holds.
Let  $\mathcal V\in \cC_X$. 
Then $\mathcal O_Y \otimes_k H^0(Y, f^*\mathcal V)$ is the maximal trivial subobject of $f^*\mathcal V$ in $\cC_Y$. 
The isomorphism
$\alpha^*f^*\mathcal V\cong \operatorname{pr}_2^*f^*\mathcal V$ induces the canonical
$G$-linearization on $f^*\mathcal V$.
Moreover, the cannonical isomorphism
\[
    \alpha^*( \mathcal O_Y \otimes_k H^0(Y, f^*\mathcal V))
    \cong \mathcal O_{G\times Y} \otimes_k H^0(Y, f^*\mathcal V)  \cong
    \operatorname{pr}_2^*(\mathcal O_Y \otimes_k H^0(Y, f^*\mathcal V))
\]
induces a $G$-linearization on $\mathcal O_Y \otimes_k H^0(Y, f^*\mathcal V)$,
and $ \mathcal O_Y \otimes_k H^0(Y, f^*\mathcal V) \hookrightarrow f^*\mathcal V$ is $G$-equivariant.
By fpqc descent, 
it descends to a monomorphism
$ 
    \mathcal V_0 := \eta_X^Y\bigl(H^0(Y,f^*\mathcal V)\bigr)
    \hookrightarrow
    \mathcal V$ in $\cC_X$.
We obtain
$f^*\mathcal V_0 \cong \mathcal O_Y \otimes_k H^0(Y, f^*\mathcal V)$. Thus condition~{\rm (b)} holds.
Finally, the condition ${\rm (c)}$ is directly from the defintion of observability and Lemma~\ref{observable}.
This proves that
$\pi(\cC_Y, y)\rightarrow \pi(\cC_X, x)\rightarrow G \rightarrow 1$
is exact.

Conversely, suppose the natural sequence $\pi(\cC_Y, y)\rightarrow \pi(\cC_X, x)\rightarrow G \rightarrow 1$ is  exact. Then $f^*:\cC_X\to \cC_Y$ is observable by Theorem~\ref{Thm1}~(3) and Lemma~\ref{observable}.

(3) Since $f: Y \rightarrow X$ is a principal $G$-bundle, we have the canonical isomorphism $f^*f_*\cO_Y \simeq
\cO_Y \otimes_k k[G]$. Since $f_*\cC_Y\subseteq \cC_X$, we have $f^*f_*\mathcal O_Y \in \cC_Y$ is a finite-rank vector bundle, it follows that $\dim_k k[G]<\infty$. Hence $G$ is a finite group scheme over $k$.

Since $f_*\cO_Y\in\cC_X$, we have 
$\eta_X^Y(\Rep^f_k(G))\subseteq \cC_X$  by Proposition~\ref{regular-representation-inclusion}.

Let $\mathcal V \in \cC_X$ and 
$E'\hookrightarrow f^*\mathcal V$
be a subobject in $\cC_Y$. Since $f$ is affine, the adjunction counit
$f^*f_*E'\twoheadrightarrow E'$
is an epimorphism. Since
$f_*E'\in\cC_X$, 
we conclude that $f^*:\cC_X \to\cC_Y$ is observable by  Lemma~\ref{observable}.

Let $E\in \cC_Y$. By assumption, we have $f_*E\in \cC_X$. Since $f$ is affine, the adjunction counit
$f^*f_*E\twoheadrightarrow E$
is an epimorphism in $\cC_Y$.  Theorem~\ref{Thm1}~(2) implies that
$\pi(\cC_Y,y)\rightarrow \pi(\cC_X,x)$
is a closed immersion. 

Combining with (2), we have the exact sequence 
$      1\rightarrow \pi(\cC_Y,y)
        \rightarrow \pi(\cC_X,x)
        \rightarrow G
        \rightarrow 1$.
\end{proof}
\begin{Proposition}\label{saturation observable}
Let $k$ be a field, $f:Y \rightarrow X$ a morphism between connected schemes over $k$, $\cC_X\subset \Vect(X)$ and $\cC_Y \subset \Vect(Y)$
 the Tannakian categories on $X$ and $Y$ respectively, $f^*\cC_X \subseteq \cC_Y$. Then $f^*: \cC_X \rightarrow \cC_Y$ is observable if and only if  $f^*: \overline{\cC}_X \to \overline{\cC}_Y$ is observable.
\end{Proposition}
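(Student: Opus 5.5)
The plan is to use the rank-one formulation of observability from the Definition, not the quotient criterion of Lemma~\ref{observable}. Rank-one objects behave well with respect to filtrations, and that is all the argument needs. Throughout I treat $\overline{\cC}_X$ and $\overline{\cC}_Y$ as Tannakian (abelian) categories containing $\cC_X$ and $\cC_Y$ as full subcategories. I also use that $f^*$ carries a filtration $0\subset E_1\subset\cdots\subset E_n=E$ with graded pieces $E^i\in\cC_X$ to a filtration of $f^*E$ with graded pieces $f^*E^i\in\cC_Y$. In particular $f^*\overline{\cC}_X\subseteq\overline{\cC}_Y$, since $f^*$ is exact on vector bundles.

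The key auxiliary step is the following. Let $M$ be a rank-one object of $\cC_Y$ with a monomorphism $M\hookrightarrow f^*F$ in $\overline{\cC}_Y$, where $F\in\overline{\cC}_X$ has graded pieces $F^i\in\cC_X$. Then $M\hookrightarrow f^*F^i$ in $\cC_Y$ for some $i$. To see this, choose the smallest $i$ such that $M\to f^*F$ factors through $f^*F_{i+1}$. The composite $M\to f^*F_{i+1}\to f^*F^i$ is then nonzero. Its kernel is a subobject of the rank-one object $M$, so it has rank $0$ or $1$. It cannot have rank $1$, since then it would be all of $M$ and the map would be zero. A rank-zero object of a Tannakian category is zero, so the map is a monomorphism. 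Because $\cC_Y$ is a full subcategory and both $M$ and $f^*F^i$ lie in $\cC_Y$, this is a subobject in $\cC_Y$.

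I also need that rank-one objects of $\overline{\cC}_Y$ already lie in $\cC_Y$. A filtration of a line bundle has only one nonzero graded piece, because every nonzero piece has positive rank. Hence the line bundle is isomorphic to that piece, which lies in $\cC_Y$.

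For ($\Leftarrow$), take $E\in\cC_X$ and a rank-one $L\hookrightarrow f^*E$ in $\cC_Y$. This is also a subobject in $\overline{\cC}_Y$. Observability of $f^*$ on the saturations gives $F\in\overline{\cC}_X$ and $n>0$ with $(L^{\otimes n})^\vee\hookrightarrow f^*F$. The auxiliary step applied to $M=(L^{\otimes n})^\vee\in\cC_Y$ yields $(L^{\otimes n})^\vee\hookrightarrow f^*F^i$ with $F^i\in\cC_X$, as required.

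For ($\Rightarrow$), take $E\in\overline{\cC}_X$ and a rank-one $L\hookrightarrow f^*E$ in $\overline{\cC}_Y$. By the remark above, $L\in\cC_Y$. The auxiliary step applied to $M=L$ and the filtration of $E$ gives $L\hookrightarrow f^*E^i$ in $\cC_Y$ with $E^i\in\cC_X$. Observability on $\cC_X$ then gives $F\in\cC_X\subseteq\overline{\cC}_X$ and $n>0$ with $(L^{\otimes n})^\vee\hookrightarrow f^*F$.

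I expect the main obstacle to be the bookkeeping between the two ambient abelian categories. One must check that subobjects and kernels computed in $\cC_Y$ agree with those in $\overline{\cC}_Y$, i.e.\ that the inclusion $\cC_Y\subset\overline{\cC}_Y$ is exact. One must also check that the subobject $f^*F_{i+1}\subseteq f^*F$ and the quotient to $f^*F^i$ are genuine subobjects and quotients in $\overline{\cC}_Y$. I would handle both by noting that all these maps are maps of vector bundles with locally free cokernels, so the abelian structures in question are the ones inherited from $\Vect$.
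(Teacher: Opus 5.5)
Your argument is correct and is essentially the paper's: the ($\Leftarrow$) direction is the same minimal-index argument along a filtration of $F$, and your auxiliary step makes precise why the composite to the graded piece is a monomorphism. In ($\Rightarrow$) you are more careful than the paper, which applies observability of $f^*|_{\cC_X}$ directly to $L\hookrightarrow f^*E$ with $E\in\overline{\cC}_X$; your reduction of $E$ to a graded piece $E^i\in\cC_X$, together with your check that rank-one objects of $\overline{\cC}_Y$ lie in $\cC_Y$, is what that step actually requires.
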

\begin{proof}
Suppose $f^*: \cC_X \rightarrow \cC_Y$ is observable.
Let $E \in \overline{\cC}_X$ and  $L\in \overline{\cC}_Y$ a rank-one subobject of $f^*E$, then $L \in \cC_Y$. Since $f^*: \cC_X \rightarrow \cC_Y$ is observable, there exist an integer $n>0$ and $F \in \cC_X \subseteq \overline{\cC}_X$ such that $(L^{\otimes n})^{\vee}\hookrightarrow f^*(F)$ is a subobject in $\cC_Y \subseteq \overline{\cC}_Y$. Hence $f^*: \overline{\cC}_X \to \overline{\cC}_Y$ is observable.

Conversely, suppose $f^*: \overline{\cC}_X \to \overline{\cC}_Y$ is observable. For any $E \in \cC_X$ and any rank-one object $L \in \cC_Y$ such that $L \hookrightarrow f^*E$, by the observability of $f^*$, there exist an integer $n>0$ and $F \in \overline{\cC}_X$ such that $(L^{\otimes n})^{\vee}\hookrightarrow f^*(F)$. Choose a filtration
$0=F_0\hookrightarrow F_1\hookrightarrow\cdots
    \hookrightarrow F_m=F$
such that $F_i/F_{i-1}\in\cC_X$
for every $i$. Let $i$ be the smallest index such that
$(L^{\otimes n})^{\vee}\hookrightarrow f^*F_i
      \twoheadrightarrow f^*(F_i/F_{i-1})$
is a monomorphism, then $f^*:\cC_X\to\cC_Y$ is observable.
\end{proof}

\begin{Corollary}\label{thm:saturation-exact-sequence}
Let $k$ be a field, $G$ an affine group scheme over $k$, $X$ a connected scheme over $k$,
$f:Y\rightarrow X$ a principal $G$-bundle, $\cC_X\subset \Vect(X)$ and $\cC_Y \subset \Vect(Y)$
 the Tannakian categories on $X$ and $Y$ respectively, such that $f^*\cC_X\subseteq \cC_Y$, $y\in Y(k)$ a rational point lying over $x \in X(k)$. 
Then:
\begin{enumerate}
\item If $\eta_X^Y\bigl(\Rep^f_k(G)\bigr)\subseteq \cC_X$, then the following are equivalent:
\begin{enumerate}
    \item the natural sequence 
$
\pi(\cC_Y, y)\rightarrow  \pi(\cC_X, x)\rightarrow  G \rightarrow 1
$
of affine group schemes over $k$  is exact.
\item the natural sequence  
$\pi(\overline{\cC}_Y, y)\rightarrow \pi(\overline{\cC}_X, x)\rightarrow G \rightarrow 1$
of affine group schemes over $k$ 
 is exact.
\end{enumerate}
\item If $f_*\cC_Y \subseteq \cC_X$,   then the natural sequence 
$1\rightarrow         
\pi(\overline{\cC}_Y,y)
        \rightarrow \pi(\overline{\cC}_X,x)
        \rightarrow G
        \rightarrow 1$
of affine group schemes over $k$ is exact.
\end{enumerate}
\end{Corollary}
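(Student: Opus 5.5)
The plan is to reduce everything to Theorem~\ref{thm:general-exact-sequence} applied twice: once to the pair $(\cC_X,\cC_Y)$ and once to the pair of saturations $(\overline{\cC}_X,\overline{\cC}_Y)$. Proposition~\ref{saturation observable} then transfers observability between the two pairs.

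Before doing so I will check that the saturated pair satisfies the hypotheses of Theorem~\ref{thm:general-exact-sequence}. Here I take for granted, as the statement implicitly does, that $\overline{\cC}_X$ and $\overline{\cC}_Y$ are Tannakian categories of vector bundles with fibre functors $|_x$ and $|_y$. Three points need verifying.
\begin{enumerate}
\item $f^*\overline{\cC}_X\subseteq\overline{\cC}_Y$. Since $f$ is flat, $f^*$ is exact, so it carries a filtration with graded pieces in $\cC_X$ to a filtration with graded pieces in $f^*\cC_X\subseteq\cC_Y$.
\item $\eta_X^Y(\Rep^f_k(G))\subseteq\cC_X\subseteq\overline{\cC}_X$. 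This is immediate from the hypothesis of (1).
\item $f_*\overline{\cC}_Y\subseteq\overline{\cC}_X$ under the hypothesis of (2). Part (3) of Theorem~\ref{thm:general-exact-sequence} makes $G$ finite, so $f$ is finite and hence affine, and $f_*$ is exact on quasi-coherent sheaves. It therefore carries a filtration of $E\in\overline{\cC}_Y$ with pieces in $\cC_Y$ to a filtration of $f_*E$ with pieces in $f_*\cC_Y\subseteq\cC_X$.
\end{enumerate}

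For (1), I apply Theorem~\ref{thm:general-exact-sequence}(2) to both pairs. Statement (a) is equivalent to observability of $f^*:\cC_X\to\cC_Y$, and statement (b) is equivalent to observability of $f^*:\overline{\cC}_X\to\overline{\cC}_Y$. Proposition~\ref{saturation observable} says these two observability conditions are equivalent, so (a) holds if and only if (b) holds.

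For (2), using the third check above, I apply Theorem~\ref{thm:general-exact-sequence}(3) directly to the pair $(\overline{\cC}_X,\overline{\cC}_Y)$. This gives the short exact sequence $1\to\pi(\overline{\cC}_Y,y)\to\pi(\overline{\cC}_X,x)\to G\to1$.

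Alternatively, one can obtain the right-exact part from (1) and Theorem~\ref{thm:general-exact-sequence}(3) for $\cC$. The closed immersion would then follow from Theorem~\ref{Thm1}(2): every $E\in\overline{\cC}_Y$ is a quotient of $f^*f_*E$, and $f_*E\in\overline{\cC}_X$ by the third check.

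The only step I expect to need care is the pushforward check. It relies on the exactness of $f_*$, which holds because $f$ is affine once $G$ is known to be finite. It also uses that the filtered pieces $f_*E_i$ are subbundles of $f_*E$, which holds since $f$ is finite flat and so $f_*$ preserves vector bundles. The remaining steps are formal.
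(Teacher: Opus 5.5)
Your proposal is correct and follows the paper's proof essentially verbatim. For (1) you apply Theorem~\ref{thm:general-exact-sequence}(2) to both the original pair and the saturated pair, and transfer observability between them via Proposition~\ref{saturation observable}; for (2) you use exactness of $f_*$ to get $f_*\overline{\cC}_Y\subseteq\overline{\cC}_X$ and then invoke Theorem~\ref{thm:general-exact-sequence}(3), and your extra checks ($f^*\overline{\cC}_X\subseteq\overline{\cC}_Y$, and that $f$ is finite so $f_*$ preserves vector bundles and filtrations by subbundles) are details the paper leaves implicit.
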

\begin{proof}
(1) $(a)\Rightarrow (b)$. If the natural sequence  
$\pi(\cC_Y, y)\rightarrow \pi(\cC_X, x)\rightarrow G \rightarrow 1$ exact, by Theorem~\ref{thm:general-exact-sequence} (2), $f^*:\cC_X \rightarrow \cC_Y $ is observable. By Proposition~\ref{saturation observable}, $f^*: \overline{\cC}_X \to \overline{\cC}_Y$ is also observable. Hence  the natural sequence  
$\pi(\overline{\cC}_Y, y)\rightarrow \pi(\overline{\cC}_X, x)\rightarrow G \rightarrow 1$
of affine group schemes over $k$ 
is exact by Theorem~\ref{thm:general-exact-sequence} (2).

$(b)\Rightarrow (a)$. If the natural sequence  
$\pi(\overline{\cC}_Y, y)\rightarrow \pi(\overline{\cC}_X, x)\rightarrow G \rightarrow 1$ is exact, by Theorem~\ref{thm:general-exact-sequence} (2), $f^*:\overline{\cC}_X \rightarrow \overline{\cC}_Y $ is observable. By Proposition~\ref{saturation observable}, $f^*: \cC_X \to \cC_Y$ is also observable. Hence the natural sequence  
$\pi(\cC_Y, y)\rightarrow \pi(\cC_X, x)\rightarrow G \rightarrow 1$
of affine group schemes over $k$ 
is exact  by Theorem~\ref{thm:general-exact-sequence} (2).

(2) Suppose $f_*\cC_Y \subseteq \cC_X$. Since $f_*$ is exact,  we have $f_*\overline{\cC}_Y \subseteq \overline{\cC}_X$. By Theorem~\ref{thm:general-exact-sequence} (3), the natural sequence 
$1\rightarrow         
\pi(\overline{\cC}_Y,y)
        \rightarrow \pi(\overline{\cC}_X,x)
        \rightarrow G
        \rightarrow 1$
is exact.
\end{proof}

\section{Fundamental group schemes of principal bundles}
\begin{Definition}
Let $k$ be a field, $X$ a geometrically reduced connected scheme proper over $k$, $x\in X(k)$, $E$ a vector bundle on $X$ of rank $r$. If $k$ is of positive characteristic, then $F_X:X\rightarrow X$ is the absolute Frobenius morphism. Then $E$ is said to be
\begin{itemize}
\item \textit{numerically flat}, if both $E$ and $E^\vee$ are nef.
\item \textit{Nori semistable}, if for any smooth projective curve $f:C\rightarrow X$, $f^*E$ is semistable of degree 0.
\item \textit{finite}, if there exist $f(t)\neq g(t)\in\mathbb{N}[t]$ such that $f(E)\cong g(E)$, where

\centerline{$h(E):=\bigoplus\limits_{i=0}^m(E^{\otimes i})^{\oplus n_i}\text{ for any }h(t)=\sum\limits_{i=0}^mn_it^i\in\mathbb{N}[t]$.}
\item \textit{essentially finite}, if there exists $E_1\hookrightarrow E_2\hookrightarrow F\in\Vect(X)$ such that $E\cong E_2/E_1$, where $E_1,E_2$ are numerically flat and $F$ is finite.
\item \textit{Frobenius finite}, if there exist $f(t)\neq g(t)\in\mathbb{N}[t]$ such that $\tilde{f}(E)\cong \tilde{g}(E)$, where 

    \centerline{$\tilde{h}(E):=\bigoplus\limits_{i=1}^{m}((F_X^i)^*E)^{\oplus n_i} \text{ for any }h(t)=\sum\limits_{i=0}^mn_it^i\in\mathbb{N}[t].$}
\item \textit{essentially Frobenius finite}, if there exists $E_1\hookrightarrow E_2\hookrightarrow F\in\Vect(X)$ such that $E\cong E_2/E_1$, where $E_1,E_2$ are numerically flat and $F$ is Frobenius finite.
\item \textit{Frobenius trivial}, if there exists a positive integer $n$ such that $F_X^{n*} E\cong \cO_X^{\oplus r}$.
\item \textit{\'etale trivializable}, if there exists a finite \'etale covering $\phi: P\rightarrow X$ such that $\phi^* E\cong \cO_P^{\oplus r}$.
\item \textit{unipotent}, if there is a filtration
$0 \hookrightarrow E_1 \hookrightarrow \cdots \hookrightarrow E_n = E$
such that
$E_{i+1}/E_i \cong \cO_X$
for any $i$.
\end{itemize}
We have the following Tannakian categories:
    \begin{itemize}
        \item $\cC^{NF}(X)$: objects consist of numerically flat bundles on $X$.
        \item $\cC^{N}(X)$: objects consist of essentially finite bundles on $X$.
        \item $\cC^{F}(X)$: objects consist of essentially Frobenius finite bundles on $X$.
        \item $\cC^{Loc}(X)$: objects consist of Frobenius trivial bundles on $X$.
        \item $\cC^{\acute{e}t}(X)$: objects consist of \'etale trivializable bundles on $X$.
        \item $\cC^{uni}(X)$: objects consist of unipotent bundles on $X$.
    \end{itemize}
We have the following Tannaka group schemes:
\begin{itemize}
        \item $\pi^{S}(X,x):=\pi(\cC^{NF}(X),x)$, called the \textit{S-fundamental group scheme}.
        \item $\pi^{N}(X,x):=\pi(\cC^{N}(X),x)$, called the \textit{Nori fundamental group scheme}.
        \item $\pi^{EN}(X,x):=\pi(\overline{\cC^{N}(X)},x)$, called the \textit{extended Nori fundamental group scheme}.
        \item $\pi^{F}(X,x):=\pi(\cC^{F}(X),x)$, called the \textit{F-fundamental group scheme}.
        \item $\pi^{EF}(X,x):=\pi(\overline{\cC^{F}(X)},x)$, called the \textit{extended F-fundamental group scheme}.
        \item $\pi^{Loc}(X,x):=\pi(\cC^{Loc}(X),x)$, called the \textit{local fundamental group scheme}.
        \item $\pi^{ELoc}(X,x):=\pi(\overline{\cC^{Loc}(X)},x)$, called the \textit{extended local fundamental group scheme}.
        \item $\pi^{\acute{e}t}(X,x):=\pi(\cC^{\acute{e}t}(X),x)$, called the \textit{\'etale fundamental group scheme}.
        \item $\pi^{E\acute{e}t}(X,x):=\pi(\overline{\cC^{\acute{e}t}(X)},x)$, called the \textit{extended \'etale fundamental group scheme}.
        \item $\pi^{uni}(X,x):=\pi(\cC^{uni}(X),x)$, called the \textit{unipotent fundamental group scheme}.
    \end{itemize}
\end{Definition}

\begin{Remark}
We have  $\cC^{\acute{e}t}(X) \subseteq \cC^{N}(X) \subseteq \cC^{NF}(X)$ and $\cC^{Loc}(X) \subseteq \cC^{F}(X) \subseteq \cC^{N}(X) \subseteq \cC^{NF}(X)$.
\end{Remark}

\begin{Lemma}[{\cite[Proposition~1.4.3]{CLM22}}]\label{surj nef}
 Let $k$ be a field, $X$ and $Y$ proper algebraic spaces over $k$, $f: Y \to X$ surjective proper morphism, let $E \in \Vect(X) $. 
 If $f^*E$ is nef, then $E$ is nef.
\end{Lemma}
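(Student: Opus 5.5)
The plan is to reduce to line bundles through the projective bundle, then test nefness on curves using the projection formula. Recall that $E$ is nef if and only if the tautological line bundle $\cO_{\PP(E)}(1)$ on $p:\PP(E)\to X$ is nef, and that a line bundle on a proper algebraic space is nef if and only if its degree on every integral proper curve is nonnegative.

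\emph{Reduction to line bundles.} Form the base change
\[
\PP(f^*E)\cong \PP(E)\times_X Y \xrightarrow{\ f'\ } \PP(E).
\]
Since surjectivity and properness are stable under base change, $f'$ is surjective and proper. Moreover $f'^*\cO_{\PP(E)}(1)\cong \cO_{\PP(f^*E)}(1)$, which is nef by hypothesis. It therefore suffices to prove the following: if $f:Y\to X$ is a surjective proper morphism of proper algebraic spaces over $k$ and $L$ is a line bundle on $X$ with $f^*L$ nef, then $L$ is nef.

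\emph{Lifting curves.} Let $C\subseteq X$ be an integral closed curve with generic point $\xi$. Because $f$ is surjective, the fibre $Y_\xi$ is a nonempty algebraic space of finite type over the function field $k(C)$. It therefore contains a closed point whose residue field is a finite extension of $k(C)$. Let $C'\subseteq Y$ be the closure of this point, with its reduced structure. Then $C'$ is an integral proper curve, and $f|_{C'}:C'\to C$ is proper, dominant and generically finite. Since both source and target are curves, it is finite and surjective, of degree $d=[k(C'):k(C)]>0$.

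\emph{Projection formula.} For a finite surjective morphism of integral proper curves we have
\[
\deg\bigl((f^*L)|_{C'}\bigr)=d\cdot \deg(L|_C).
\]
One can verify this after passing to normalizations, which are projective regular curves over $k$, where it is the classical degree formula. The left-hand side is $\geq 0$ because $f^*L$ is nef, so $\deg(L|_C)\geq 0$. As $C$ was arbitrary, $L$ is nef, and hence so is $E$.

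The main obstacle is making the curve-lifting step rigorous for algebraic spaces rather than schemes. I would handle it in one of two ways. The first option is to work directly with the generic fibre: an algebraic space locally of finite type over a field has closed points with finite residue fields, and points of $|Y_\xi|$ map to points of $|Y|$ whose closure has dimension one. The alternative is to first replace $Y$ by a scheme via Chow's lemma, choosing a projective surjective $Y''\to Y$ with $Y''$ a scheme. Nefness is preserved under pullback, so $Y''\to X$ is a surjective proper morphism from a scheme with nef pullback of $L$. One could similarly reduce $X$ to a scheme by replacing it with a Chow cover, since every curve of $X$ lifts. A second point to check is that degrees on curves in algebraic spaces behave as for schemes. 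This holds because integral proper one-dimensional algebraic spaces over a field are schemes.
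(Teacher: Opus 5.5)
Your proof is correct. You reduce to $\cO_{\PP(E)}(1)$ via $\PP(f^*E)\cong\PP(E)\times_X Y$, lift each integral curve $C\subseteq X$ to an integral curve $C'\subseteq Y$ finite and surjective over it, and apply $\deg\bigl((f^*L)|_{C'}\bigr)=[k(C'):k(C)]\deg(L|_C)$. The paper gives no argument of its own beyond citing \cite[Proposition~1.4.3]{CLM22}, and your route is, to my knowledge, the standard argument behind that reference.
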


\begin{Lemma}[{\cite[Proposition~1.4.4]{CLM22}}]\label{nef}
    Let $k$ be a field, $X$ a proper algebraic space over $k$, $E \in \Vect(X) $. Then $E$ is nef if and only if  the pullback $E\otimes_k k'$ on $X_{k'}$ is nef for every field extension $k \subset k'$.
\end{Lemma}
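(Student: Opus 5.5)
The plan is to reduce both directions to a statement about a single line bundle and then compare degrees on curves. I use the standard definition: $E$ is nef if the tautological line bundle $\cO_{\PP(E)}(1)$ on $\PP(E)$ is nef, i.e. has nonnegative degree on every integral closed curve of $\PP(E)$ (equivalently, for every proper curve mapping to $\PP(E)$). Projectivization commutes with base change: $\PP(E)_{k'}\cong\PP(E\otimes_k k')$, and $\cO_{\PP(E)}(1)$ pulls back to $\cO_{\PP(E_{k'})}(1)$. So it suffices to prove the following. Let $Z$ be a proper algebraic space over $k$ and $L$ a line bundle on $Z$. Then $L$ is nef if and only if $L_{k'}$ is nef on $Z_{k'}$ for every extension $k\subset k'$.

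\emph{If $L_{k'}$ is nef, then $L$ is nef.} Let $C\subset Z$ be an integral proper curve. Then $C_{k'}$ is a proper one-dimensional scheme over $k'$. Degrees are read off from Euler characteristics: $\deg(L|_C)$ is the leading coefficient of $n\mapsto\chi(C,L^{\otimes n}|_C)$. Flat base change gives $\chi(C,L^{\otimes n})=\chi(C_{k'},L_{k'}^{\otimes n})$, so $\deg(L|_C)=\deg(L_{k'}|_{C_{k'}})$. The right-hand side equals $\sum_j m_j\deg(L_{k'}|_{C'_j})$, where $C'_j$ are the irreducible components with their multiplicities $m_j$. Each term is $\ge 0$ by hypothesis, hence $\deg(L|_C)\ge 0$.

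\emph{If $L$ is nef, then $L_{k'}$ is nef.} Let $C'\subset Z_{k'}$ be an integral proper curve. First, $C'$ and its embedding are defined over a finitely generated subextension $K\subset k'$, and degrees are unchanged under the extension $K\subset k'$ by the Euler characteristic argument above; so we may take $k'=K$ finitely generated. Next, spread out: $K$ is the function field of an integral $k$-variety $T$, and there is a closed subspace $\mathcal C\subset Z\times_k T$ with generic fibre $C'$. After shrinking $T$ we may assume $\mathcal C\to T$ is flat and proper with one-dimensional fibres. Then $t\mapsto\chi(\mathcal C_t,L^{\otimes n}|_{\mathcal C_t})$ is locally constant, so $\deg(L_K|_{C'})=\deg(L|_{\mathcal C_t})$ for a closed point $t\in T$; its residue field $k(t)$ is finite over $k$. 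Finally, the proper curve $\mathcal C_t$ over $k(t)$ maps to $Z$. By the projection formula, $\deg(L|_{\mathcal C_t})$ is a nonnegative combination of the degrees of $L$ on the image curves in $Z$ (components contracted to points contribute $0$). These degrees are $\ge0$ because $L$ is nef, and finite extensions of the base field only rescale degrees by positive factors.

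I expect the main obstacle to be the spreading-out step. It must be done for algebraic spaces rather than schemes, and one has to check that flatness can be arranged over a dense open of $T$ (generic flatness), so that the degree, an intersection number, is constant in the family. If this becomes delicate, I would first reduce to the case where $Z$ is a scheme: by Chow's lemma, choose a proper surjection $Z'\to Z$ from a scheme, and transfer nefness along it using Lemma~\ref{surj nef} together with the fact that nefness is preserved under pullback.
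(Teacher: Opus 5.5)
Your argument is correct. Note, though, that the paper gives no proof of this lemma: it only quotes \cite[Proposition~1.4.4]{CLM22}. So your write-up is a self-contained substitute for that citation, not a variant of an argument in the text.

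Your ingredients are the standard ones:
\begin{itemize}
\item the reduction to $\cO_{\PP(E)}(1)$;
\item invariance of $\chi$, and hence of degrees on curves, under flat base change;
\item descent to a finitely generated extension, spreading out, and specialization to a closed point with finite residue field.
\end{itemize}
The spreading-out step you flag as the main obstacle is not really one. Closed subspaces of finite presentation descend along limits, and generic flatness and local constancy of $\chi$ in flat proper families both hold for algebraic spaces. Moreover, a proper algebraic space of dimension one over a field is a scheme, so all degree computations on curves already take place on schemes. Your Chow's lemma fallback would also work.

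One point is worth making explicit. As literally stated, the implication ``$E\otimes_k k'$ nef for every $k'$ $\Rightarrow$ $E$ nef'' is trivial: take $k'=k$. Your Euler characteristic argument proves something stronger, namely that nefness over a \emph{single} extension $k'$ descends to $k$. That stronger form is what the paper actually uses in Proposition~\ref{NF observable}. There, numerical flatness of $f^*f_*E$ is verified only after base change to $\overline{k}$, and the conclusion is then drawn over $k$. So your proof supplies exactly the version needed, which the lemma as phrased in the paper does not literally state.
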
 

\begin{Proposition}\label{general observable}
Let $k$ be a field, $f: Y\rightarrow X$ a morphism between 
geometrically reduced connected schemes proper over $k$, $\cC_X\subset \Vect(X)$ and
$\cC_Y\subset \Vect(Y)$ the Tannakian categories on $X$ and $Y$ respectively, such that $f^*\cC_X \subseteq \cC_Y \subseteq \cC^{EN}(Y)$. Then $f^*: \cC_X \to \cC_Y$ is observable.
In particular,
$f^*:\cC^{*}(X)\rightarrow\cC^{*}(Y)$ is observable for
$*\in
\{N,EN,F,EF,Loc,ELoc,\acute{e}t,E\acute{e}t,uni\}$.
\end{Proposition}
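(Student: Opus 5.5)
Since $f^*\cC_X\subseteq \cC_Y$ is already given, I will prove observability in the form of Lemma~\ref{observable}(2): for $\mathcal V\in\cC_X$ and a quotient $f^*\mathcal V\twoheadrightarrow E'$ in $\cC_Y$, I will embed $E'$ into some $f^*\mathcal F$ with $\mathcal F\in\cC_X$. It is simplest to work directly with the rank-one formulation, though. Let $L\hookrightarrow f^*\mathcal V$ be a rank-one subobject in $\cC_Y$. The key input is that $\cC_Y\subseteq\cC^{EN}(Y)=\overline{\cC^N(Y)}$, so $L$ has a filtration with graded pieces in $\cC^N(Y)$. Since $L$ has rank one, this filtration is trivial, and hence $L$ itself lies in $\cC^N(Y)$. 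A rank-one essentially finite bundle is finite, hence torsion in $\mathrm{Pic}(Y)$: indeed $\pi^N(Y,y)$ acts on the fibre $L|_y$ through a character of a finite quotient group scheme, so that character has finite order $n$. Therefore $L^{\otimes n}\cong\cO_Y$.

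With this, the observability condition follows at once. I take $F:=\cO_X\in\cC_X$ (the unit object of the Tannakian category). Then $(L^{\otimes n})^\vee\cong\cO_Y=f^*\cO_X$, which is trivially a subobject of $f^*(F)$ in $\cC_Y$. This verifies the definition of observability with this $n$ and $F$.

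For the ``in particular'' part, I need $f^*\cC^*(X)\subseteq\cC^*(Y)\subseteq\cC^{EN}(Y)$ for each listed $*$. The second inclusion follows from the Remark ($\cC^{\acute et},\cC^{Loc},\cC^F\subseteq\cC^N$), which passes to saturations. For $\cC^{uni}$ it holds because unipotent bundles have graded pieces $\cO_Y\in\cC^N(Y)$. Pullback-compatibility holds because each defining property (finiteness, Frobenius finiteness, Frobenius triviality, étale trivializability, numerical flatness via Lemma~\ref{surj nef}-type stability, unipotent filtrations) is preserved by $f^*$, since $f^*$ commutes with tensor operations and $F_Y^*f^*=f^*F_X^*$.

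The main obstacle is justifying that a rank-one essentially finite bundle is torsion. To handle it, I would invoke Tannakian duality: the subcategory $\langle L\rangle^{\otimes}\subseteq\cC^N(Y)$ has group a quotient of $\pi^N(Y,y)$ of the form $\mathbb G_m$-subgroup image. Every object of $\cC^N(Y)$ generates a finite group scheme, being essentially finite (a subquotient of a finite bundle, whose monodromy group is finite by Nori). A finite subgroup scheme of $\mathbb G_m$ is $\mu_n$, which gives $L^{\otimes n}\cong\cO_Y$.
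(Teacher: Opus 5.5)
Your proof is correct and follows the paper's argument: a rank-one subobject $L$ of $f^*E$ in $\cC_Y\subseteq\cC^{EN}(Y)$ must lie in $\cC^N(Y)$, hence is torsion, so $(L^{\otimes n})^\vee\cong\cO_Y=f^*\cO_X$ with $\cO_X\in\cC_X$. Your Tannakian justification of the torsion step (a finite group scheme inside $\mathbb{G}_m$ is some $\mu_n$) and your check of the inclusions for the ``in particular'' part supply details the paper leaves implicit; one small correction is that stability of nefness under pullback is the standard pullback property of nef bundles, not Lemma~\ref{surj nef}, which goes in the opposite direction.
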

\begin{proof}
For any $E \in \cC_X \subseteq \cC^{EN}(X)$ and any rank-one subobject $L \hookrightarrow f^*E$ in $\cC_Y \subseteq \cC^{EN}(Y)$, we have $L \in \cC^N(Y)$.  
Then $L^{\otimes n}\cong \cO_Y$ for some integer $n>0$. Since $ \cO_X \in \cC_X$, we have 
$ (L^{\otimes n})^\vee
        \cong \cO_Y \cong
    f^*\cO_X$.
 Hence $f^*:\cC_X\rightarrow\cC_Y$ is observable.
\end{proof}

\begin{Proposition}\label{NF observable}
Let $k$ be a field,  $X$ and $Y$  geometrically reduced connected proper $k$-schemes, and 
$f:Y\rightarrow X$  a principal $G$-bundle, where $G$ is a finite group scheme over $k$.
Then $f_*\bigl(\cC^{NF}(Y)\bigr) \subseteq \cC^{NF}(X)$.
\end{Proposition}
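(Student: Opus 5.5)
Fix $E\in\cC^{NF}(Y)$. Since $f$ is finite and flat (a torsor under a finite group scheme), $f_*E$ is a vector bundle on $X$. Numerical flatness means that both $f_*E$ and $(f_*E)^\vee$ are nef, and I will prove both nefness statements by pulling back along $f$ and then using Lemma~\ref{surj nef}. The key tool is the Cartesian square from the proof of Theorem~\ref{thm:general-exact-sequence}: $f\circ\alpha=f\circ\operatorname{pr}_2$ with $f$ flat and affine. Flat base change gives
\[
f^*f_*E\cong \operatorname{pr}_{2*}\alpha^*E .
\]
Here $\operatorname{pr}_2:G\times Y\to Y$ is the trivial finite flat cover, so $\operatorname{pr}_{2*}\alpha^*E\cong k[G]\otimes_k(\cdots)$ after choosing a basis. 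The cleanest way to organise this is geometric. Let $\bar k$ be an algebraic closure. By Lemma~\ref{nef}, nefness can be checked after base change to $\bar k$, and numerical flatness of $E$ is preserved by that base change. Pushforward also commutes with flat base change. So I may assume $k=\bar k$.

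Over $\bar k$, the reduced scheme $G_{\mathrm{red}}$ is a finite set of rational points $g$. The scheme $G\times Y$ carries a finite filtration of $\cO_{G\times Y}$, coming from a filtration of $k[G]$ by powers of the augmentation ideals at each point, with successive quotients isomorphic to $\cO_{\{g\}\times Y}$. Pushing $\alpha^*E$ forward along $\operatorname{pr}_2$ then gives a filtration of $f^*f_*E$ whose graded pieces are $\alpha_g^*E=g^*E$, where $g$ acts as an automorphism of $Y$. Each $g^*E$ is numerically flat. Because $\operatorname{pr}_2$ is flat, each graded piece is a quotient of $\alpha^*E$ tensored with a finite-dimensional $k$-vector space of the form $\mathfrak m^i/\mathfrak m^{i+1}$, so in fact each graded piece is a direct sum of copies of $g^*E$. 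Nef bundles are closed under extensions, so $f^*f_*E$ is nef. Applying the same argument to $E^\vee$, and using duality for the finite flat map, $(f_*E)^\vee\cong f_*(E^\vee\otimes\omega_{Y/X})$. For a $G$-torsor, $\omega_{Y/X}\cong f^*(\text{line bundle attached to } \omega_G)$, which is trivial because $\omega_G$ is trivial for a finite group scheme over a field. Hence $f^*(f_*E)^\vee$ is nef as well. Lemma~\ref{surj nef}, applied to the surjective finite morphism $f$, then shows that $f_*E$ and $(f_*E)^\vee$ are nef.

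The main obstacle is making the filtration step rigorous when $G$ is non-reduced. One needs to check that the graded pieces of the $\mathfrak m$-adic filtration on $\operatorname{pr}_{2*}\alpha^*E$ really are sums of $g^*E$. The reason is that $\alpha$ restricted to the infinitesimal thickening $G^0\cdot g\times Y$ reduces, modulo each power of $\mathfrak m$, to $g^*E$ tensored with a vector space. If this is awkward, a fallback is to avoid duality altogether: note that $f^*((f_*E)^\vee)\cong\operatorname{pr}_{2*}(\alpha^*E)^\vee$, using the trace-free identification $\operatorname{pr}_{2*}(\cdot)^\vee\cong\operatorname{pr}_{2*}((\cdot)^\vee)$, which holds since $k[G]$ is Frobenius. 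Then run the same filtration argument with $E^\vee$ in place of $E$.
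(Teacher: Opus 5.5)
Your proof is correct and follows essentially the paper's route: flat base change $f^*f_*E\cong\operatorname{pr}_{2*}\alpha^*E$, passage to $\bar k$, the $\mathfrak m_i$-adic filtration on each local component of $G_{\bar k}$ with graded pieces $(t_{g_i}^*E_{\bar k})^{\oplus d_{ij}}$, closure under extensions, and then Lemma~\ref{surj nef}. The only difference is the dual. The paper uses that numerically flat bundles are closed under extensions, so $f^*f_*E$ is itself numerically flat and $f^*\bigl((f_*E)^\vee\bigr)\cong(f^*f_*E)^\vee$ is nef; this makes your detour through Grothendieck duality and $\omega_{Y/X}$ unnecessary, though it is valid, since $\omega_{Y/X}\cong\cO_Y$ by the Frobenius property of $k[G]$ and restriction to the identity section.
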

\begin{proof}
Let $E\in\cC^{NF}(Y)$. Denote by
$\alpha:G\times Y \rightarrow Y$
the action and by
$p:G\times Y\rightarrow Y$
the second projection. The isomorphism
$G\times Y\xrightarrow{\sim}Y\times_XY$
and flat base change give
$f^*f_*E \cong p_{*}\alpha^*E$.
Let $\overline{k}$ be an algebraic closure of $k$, and write
\[
    G_{\overline{k}}=\coprod_{i=1}^rG_i,
    \qquad
    G_i=\operatorname{Spec}(A_i),
\]
where each $A_i$ is a local Artinian $\overline{k}$-algebra with maximal ideal $\mathfrak m_i$ and residue field $\overline{k}$. Let
\[
    \alpha_i:G_i \times Y_{\overline{k}}\rightarrow Y_{\overline{k}},
    \quad
    p_i :G_i\times Y_{\overline{k}}\rightarrow Y_{\overline{k}}
\]
be the restrictions of the action and the second projection. Let $g_i\in G_i(\overline{k})$ be the closed point,
$\iota_i:Y_{\overline{k}}
    \cong \{g_i\}\times Y_{\overline{k}}
    \hookrightarrow G_i \times Y_{\overline{k}}$
the closed immersion, and $t_{g_i}:Y_{\overline{k}}\to Y_{\overline{k}}$ denote translation by $g_i$. Thus
$\alpha_i\circ\iota_i=t_{g_i}$ and $p_i\circ\iota_i=\operatorname{Id}_{Y_{\overline{k}}}$.

Let $\mathcal J_i:=\mathfrak m_i\mathcal O_{G_i\times_{\overline{k}}Y_{\overline{k}}}$ induced by the projection $q_i: G_i \times_{\overline{k}} Y_{\overline{k}} \rightarrow G_i$. Choose $N_i>0$ such that $\mathcal J_i^{N_i}=0$, we have a finite filtration
\[
    \alpha_i^*E_{\overline{k}} = \mathcal J_i^0 \alpha_i^*E_{\overline{k}}\supseteq\mathcal J_i^1 \alpha_i^*E_{\overline{k}} \supseteq\cdots
    \supseteq\mathcal \mathcal J_i^{N_i} \alpha_i^*E_{\overline{k}}=0.
\]
Since $p_i$ is finite, $p_{i*}$ is exact. Thus
\[
    p_{i*}\alpha_i^*E_{\overline{k}} =p_{i*}\bigl(\mathcal J_i^0 \alpha_i^*E_{\overline{k}}\bigr)\supseteq p_{i*}\bigl(\mathcal J_i^1 \alpha_i^*E_{\overline{k}}\bigr) \supseteq\cdots
    \supseteq\mathcal \mathcal  p_{i*}\bigl(J_i^{N_i} \alpha_i^*E_{\overline{k}}\bigr)=0
\]
is also a finite filtration. 
Set $\mathcal F_i^j:=p_{i*}\bigl(\mathcal J_i^j \alpha_i^*E_{\overline{k}}\bigr)$, $0\leq j\leq N_i$.
Since $\alpha_i^*E_{\overline{k}}$ is locally free, by the projection formula, its successive quotients satisfy
\[
\begin{aligned}
\mathcal F_i^j/\mathcal F_i^{j+1} 
&\cong p_{i*}\left( \mathcal J_i^j \alpha_i^*E_{\overline{k}} \big/ \mathcal J_i^{j+1}\alpha_i^*E_{\overline{k}} \right) 
\cong p_{i*}\left( \mathcal J_i^j \big/ \mathcal J_i^{j+1} \otimes_{\overline{k}} \alpha_i^*E_{\overline{k}}\right) \\
&\cong p_{i*}\left( (\iota_{i*}\cO_{Y_{\overline{k}}})^{\oplus d_{ij}} \otimes_{\overline{k}}\alpha_i^*E_{\overline{k}} \right) 
\cong p_{i*}\iota_{i*} \bigl(t_{g_i}^*E_{\overline{k}}\bigr)^{\oplus d_{ij}} 
\cong \bigl(t_{g_i}^*E_{\overline{k}}\bigr)^{\oplus d_{ij}},
\end{aligned}
\]
where $d_{ij}:=\dim_{\overline{k}}
    \mathfrak m_i^j/\mathfrak m_i^{j+1}$.
Each $t_{g_i}$ is an automorphism, so $t_{g_i}^*E_{\overline{k}}$ is numerically flat on $Y_{\overline{k}}$. Since numerically flat bundles are closed under finite direct sums and extensions, each $p_{i*}\alpha_i^*E_{\overline{k}}$ is numerically flat on $Y_{\overline{k}}$. Since finite pushforward commutes with field extension, we obtain
\[
    (f^*f_*E)_{\overline{k}}
    \cong
    \bigoplus_{i=1}^r p_{i*}\alpha_i^*E_{\overline{k}},
\]
which is numerically flat.

By Lemma~\ref{nef}, the vector bundle $f^*f_*E$ is numerically flat on $Y$. Applying Lemma~\ref{surj nef} to the surjective proper morphism $f$, we have $f_*E$ is numerically flat on $X$. This proves that 
$f_*\bigl(\cC^{NF}(Y)\bigr) \subseteq \cC^{NF}(X)$.
\end{proof}

\begin{Theorem}\label{exact-certain-categories}
Let $k$ be a field, $X$ and $Y$ geometrically reduced connected schemes proper over $k$,
$f:Y\rightarrow X$ a principal $G$-bundle with $G$  a finite group scheme over $k$, and $y\in Y(k)$ a rational point lying over $x \in X(k)$.
Then for $*\in
\{S,N,EN\}$, the natural sequence of affine $k$-group schemes is exact:
\[
       1 \rightarrow
        \pi^*(Y,y)
        \rightarrow
        \pi^*(X,x)
        \rightarrow
        G
        \rightarrow 1.
\]
\end{Theorem}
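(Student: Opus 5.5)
The plan is to apply Theorem~\ref{thm:general-exact-sequence}~(3) to each category in turn. For $*=EN$ we use Corollary~\ref{thm:saturation-exact-sequence}~(2) instead. In every case the work reduces to checking $f^*\cC_X\subseteq \cC_Y$ and $f_*\cC_Y\subseteq\cC_X$. The existence of the Tannakian categories (which forces $H^0(Y,\cO_Y)=k$) and the rational point $y$ over $x$ are given.

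\emph{Case $*=S$.} Here $\cC_X=\cC^{NF}(X)$ and $\cC_Y=\cC^{NF}(Y)$. Pullback along the finite surjective morphism $f$ preserves nefness, and pullback commutes with duals, so $f^*\cC^{NF}(X)\subseteq \cC^{NF}(Y)$. The reverse inclusion $f_*\cC^{NF}(Y)\subseteq \cC^{NF}(X)$ is exactly Proposition~\ref{NF observable}. Theorem~\ref{thm:general-exact-sequence}~(3) then gives $1\to\pi^S(Y,y)\to\pi^S(X,x)\to G\to 1$.

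\emph{Case $*=N$.} Essentially finite bundles are stable under pullback, since pullback of a finite bundle is finite and pullback of a numerically flat bundle is numerically flat. Hence $f^*\cC^N(X)\subseteq\cC^N(Y)$. The main obstacle is showing $f_*\cC^N(Y)\subseteq \cC^N(X)$. For this I would use Nori's description: $E\in\cC^N(Y)$ is trivialized by some pointed torsor $h:P\to Y$ under a finite group scheme $H$. I would then argue that $f\circ h:P\to X$ can be dominated by a torsor over $X$ under a finite group scheme; the natural candidate is the Galois closure, i.e.\ a finite quotient of $\pi^N(X,x)$. This would show that $f_*E$ is essentially finite.

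An alternative route for the same step, which I would try first, is entirely Tannakian. By Proposition~\ref{NF observable}, $f_*E$ is numerically flat. Since $\cC^N(X)$ is closed under subquotients in $\cC^{NF}(X)$, it suffices to embed $f_*E$ into a finite bundle. Write $E$ as a subquotient of a finite bundle $F$ on $Y$; exactness of $f_*$ then makes $f_*E$ a subquotient of $f_*F$. The bundle $f_*F$ is finite because $f_*$ is compatible with the relations $f(F)\cong g(F)$ after passing to the torsor $Y\times_X Y\cong G\times Y$. Concretely, $f^*f_*F\cong p_*\alpha^*F$ is a subquotient of finite bundles, so $f_*F$ is essentially finite by descent of essential finiteness along finite torsors. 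That descent is a known result (Nori, Mehta--Subramanian, or Antei--Emsalem--Gasbarri), which I would cite.

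\emph{Case $*=EN$.} Given $f_*\cC^N(Y)\subseteq\cC^N(X)$ and $f^*\cC^N(X)\subseteq\cC^N(Y)$, apply Corollary~\ref{thm:saturation-exact-sequence}~(2) directly to get the exact sequence for $\pi^{EN}$. It uses only that $f_*$ and $f^*$ are exact and so preserve filtrations with graded pieces in $\cC^N$. Observability is also automatic from Proposition~\ref{general observable}, which serves as a consistency check.
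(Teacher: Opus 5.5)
Your skeleton matches the paper's. For $*=S$ you combine Proposition~\ref{NF observable} with Theorem~\ref{thm:general-exact-sequence}~(3). For $*=EN$ you use exactness of $f_*$ to pass to saturations; the paper applies Theorem~\ref{thm:general-exact-sequence}~(3) to $\overline{\cC^N}$, which is the content of Corollary~\ref{thm:saturation-exact-sequence}~(2). For $*=N$ everything rests on $f_*\cC^N(Y)\subseteq\cC^N(X)$. The paper does not prove this but cites \cite[Theorem~3.8]{AnEm11}, which is the Galois-closure argument of your first route. If you rely on that citation, your proof is the paper's.

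The Tannakian route you say you would try first does not work as written, for two reasons.

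\textbf{First issue.} $f_*$ is not a tensor functor, so a relation $a(F)\cong b(F)$ among tensor powers of $F$ gives nothing for $f_*F$. Compare Theorem~\ref{finite-etale-exact}: there the analogous trick works only because Frobenius-finiteness involves just direct sums and Frobenius pullbacks, which commute with $f_*$ for \'etale $f$.

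\textbf{Second issue.} The computation in Proposition~\ref{NF observable} does not exhibit $p_*\alpha^*F$ as a subquotient of finite bundles. Over $\bar k$ it gives a filtration whose graded pieces are translates $t_{g_i}^*F_{\bar k}$. So $f^*f_*F$ is only an iterated \emph{extension} of finite bundles, and such extensions need not be essentially finite. For example, a non-split extension of $\cO$ by $\cO$ on an elliptic curve in characteristic $0$ is not essentially finite.

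For \'etale $G$ the filtration splits as $\bigoplus_g t_g^*F_{\bar k}$, and the route can be repaired. This uses a descent from $\bar k$ to $k$. Descent along $f$ itself is harmless: if $V$ is numerically flat and $f^*V$ is essentially finite, then by the $S$-sequence the image of $\pi^S(X,x)$ in $\mathrm{GL}(V|_x)$ is an extension of a quotient of $G$ by a finite group scheme.

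For non-reduced $G$, however, the infinitesimal part contributes self-extensions of $F$, and these are exactly the difficulty. What is missing is an argument that the restriction to $\pi^S(Y,y)$ of the induced representation corresponding to $f_*F$ factors through a finite quotient. One way would be a normal-core or Galois-closure construction over all of $G$. That is the real content of the Antei--Emsalem theorem, so without such an argument you must fall back on \cite[Theorem~3.8]{AnEm11}.
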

\begin{proof}
For the case $* = S$.
By Proposition~\ref{NF observable}, we have  $f^*\cC^{NF}(Y) \subseteq \cC^{NF}(X)$. Hence 
Theorem~\ref{thm:general-exact-sequence}~(3) implies that the natural sequence
$       1 \rightarrow
        \pi^S(Y,y)
        \rightarrow
        \pi^S(X,x)
        \rightarrow
        G
        \rightarrow 1$
is exact.

For the case $*= N$. By \cite[Theorem~3.8]{AnEm11}, $f_*E \in \cC^N(X)$ for any $E \in \cC^{N}(Y)$. Hence Theorem~\ref{thm:general-exact-sequence}~(3) implies that the natural sequence
$       1 \rightarrow
        \pi^N(Y,y)
        \rightarrow
        \pi^N(X,x)
        \rightarrow
        G
        \rightarrow 1$
is exact.

For the case $*= EN$. Since $f_*$ is exact, we obtain that $f_*\cC^{EN}(Y) \subseteq \cC^{EN}(X)$. Hence Theorem~\ref{thm:general-exact-sequence}~(3) implies that the natural sequence
$       1 \rightarrow
        \pi^{EN}(Y,y)
        \rightarrow
        \pi^{EN}(X,x)
        \rightarrow
        G
        \rightarrow 1$
is exact.
\end{proof}

\begin{Remark} For the Nori fundamental group scheme, the short exact sequence can also be obtained from \cite[Theorem~4.1]{AnEm11}. Their argument is torsor-theoretic: using the Galois closure of essentially finite morphisms, they identify the universal pointed torsor of $Y$ with the torsor over $Y$ induced by the kernel of $\pi^N(X,x)\twoheadrightarrow G$, and hence there exists an exact sequence  $1\rightarrow\pi^N(Y,y)\rightarrow\pi^N(X,x) \rightarrow G\rightarrow 1$.
\end{Remark}

\begin{Theorem}\label{finite-etale-exact}
Let $k$ be a field, $X$ and $Y$ geometrically reduced connected schemes proper over $k$,
$f:Y\rightarrow X$  a finite \'etale Galois morphism with
Galois group $G=\operatorname{Aut}_X(Y)$, $y\in Y(k)$ a rational point lying over $x \in X(k)$. Let $*\in
\{F,EF,\acute e t,E\acute e t\}$.
If $* \in \{F, EF\}$, we assume that $X$ is a regular. Then 
the natural sequence of affine $k$-group schemes is exact:
\[
     1 \rightarrow
        \pi^*(Y,y)
        \rightarrow
        \pi^*(X,x)
        \rightarrow
        G
        \rightarrow 1.
\]
\end{Theorem}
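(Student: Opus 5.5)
The plan is to apply Theorem~\ref{thm:general-exact-sequence}~(3) and Corollary~\ref{thm:saturation-exact-sequence}~(2). A finite \'etale Galois morphism $f:Y\to X$ with group $G=\operatorname{Aut}_X(Y)$ is a principal bundle under the constant finite $k$-group scheme $G$. So everything reduces to two inclusions: $f^*\cC^*(X)\subseteq\cC^*(Y)$ and $f_*\cC^*(Y)\subseteq\cC^*(X)$, for $*\in\{F,\acute et\}$. Once $f_*\cC^*(Y)\subseteq\cC^*(X)$ holds, part (3) gives the short exact sequence for $\pi^F$ and $\pi^{\acute et}$. The extended versions then follow from Corollary~\ref{thm:saturation-exact-sequence}~(2), since $f_*$ is exact and so preserves filtrations with graded pieces in $\cC^*$.

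For $*=\acute et$, pullback is clear: if $\phi:P\to X$ is finite \'etale and trivializes $E$, then $P\times_XY\to Y$ is finite \'etale and trivializes $f^*E$. For pushforward, let $E\in\cC^{\acute et}(Y)$ be trivialized by a finite \'etale $\psi:P\to Y$. I may pass to a connected component of $P$ and then to a Galois closure of the composite $f\circ\psi:P\to X$. This gives a connected finite \'etale Galois cover $h:Z\to X$ that factors through $Y$ and trivializes $E$. Base change gives $h^*f_*E\cong p_*q^*E$, where $Z\times_XY\cong\coprod_{\sigma}Z$ is a disjoint union of copies of $Z$, since $Z$ dominates $Y$ and $Y\to X$ is Galois. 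On each copy, $q^*E$ is the pullback of $E$ along $Z\to Y\xrightarrow{\sigma}Y$, and this is trivial because $\sigma^*E$ is trivialized by $Z\xrightarrow{\sigma^{-1}}\cdots$; equivalently, the whole orbit of $E$ is trivialized on the Galois closure. Hence $h^*f_*E$ is a direct sum of trivial bundles, so $f_*E\in\cC^{\acute et}(X)$. The step I expect to need care is the existence of a connected Galois closure over a non-algebraically closed $k$. I would handle it via the finite quotient of $\pi_1^{\mathrm{\acute et}}(X,\bar x)$, using connectedness of $X$.

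For $*=F$, pullback preserves numerical flatness and commutes with absolute Frobenius, because $F_Y\circ f=f\circ F_X$ up to the standard identification. So Frobenius finiteness, and hence essential Frobenius finiteness, is preserved. Pushforward uses that $f$ is \'etale. The relative Frobenius square is cartesian, so $F_X^*f_*\cong f_*F_Y^*$ by flat base change, which requires $F_X$ to be flat. This is where regularity of $X$ enters: by Kunz's theorem, $F_X$ is flat when $X$ is regular. Therefore $f_*$ commutes with the operation $\tilde h(\cdot)$, and $f_*$ maps Frobenius finite bundles to Frobenius finite bundles. It also preserves numerical flatness by Proposition~\ref{NF observable}, and it preserves injections because it is exact. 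Hence $E\cong E_2/E_1$ with $E_1\hookrightarrow E_2\hookrightarrow F$ pushes forward to $f_*E\cong f_*E_2/f_*E_1$ with $f_*E_1\hookrightarrow f_*E_2\hookrightarrow f_*F$, which is a presentation of the required form.

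The main obstacle is the Frobenius base change identity $F_X^*f_*E\cong f_*F_Y^*E$. I would prove it first, using that for $f$ \'etale the square formed by $F_Y$, $F_X$ and $f$ is cartesian, and then apply flat base change with $F_X$ flat by regularity. With both pushforward inclusions established, the theorem follows directly from the cited results.
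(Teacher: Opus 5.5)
Your proposal is correct and follows the paper's architecture. You reduce to $f_*\cC^*(Y)\subseteq\cC^*(X)$ for $*\in\{F,\acute{e}t\}$, apply Theorem~\ref{thm:general-exact-sequence}~(3), and get the extended versions from Corollary~\ref{thm:saturation-exact-sequence}~(2).

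Your $F$ case is essentially the paper's argument: the cartesian Frobenius square for \'etale $f$, flatness of $F_X$ from regularity, compatibility of $f_*$ with $\widetilde h$, and Proposition~\ref{NF observable} for the numerically flat pieces. You also check $f^*\cC^*(X)\subseteq\cC^*(Y)$, which the paper takes for granted. The relation there should read $f\circ F_Y=F_X\circ f$.

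The real difference is the \'etale case.
\begin{itemize}
\item The paper writes $f^*f_*E\cong\bigoplus_{g\in G}g^*E$. It trivializes each $g^*E$ by the base change $P^g=P\times_{u,Y,g}Y$ of the given cover and takes the fibre product of all $P^g$ over $Y$. Composing with $f$ gives a finite \'etale cover of $X$ trivializing $f_*E$, with no Galois closures or fundamental groups.
\item Your route yields a single connected Galois cover $h:Z\to X$ trivializing $f_*E$. The price is the Galois-closure machinery and one fact you only gesture at. On the copy of $Z$ indexed by $\sigma$, the summand is $(\sigma\circ\rho)^*E=\rho^*\sigma^*E$, where $\rho:Z\to Y$. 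Noting that $\sigma^*E$ is trivialized by $\sigma^{-1}\circ\rho$ does not show $\rho^*\sigma^*E$ is trivial.
\end{itemize}
What you need is that $\operatorname{Aut}_X(Z)$ acts transitively on $\Hom_X(Z,Y)$, since $Z$ is Galois and $Y$ is connected. Then $\sigma\circ\rho=\rho\circ\tau$ for some deck transformation $\tau$, so $\rho^*\sigma^*E\cong\tau^*\rho^*E$ is trivial. With that sentence added, your argument is complete.

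On the step you flag as the main obstacle: $f$ is affine, so $F_X^*f_*E\cong f_*F_Y^*E$ holds for the cartesian square by affine base change, with no flatness of $F_X$ needed. Kunz's theorem, which both you and the paper invoke, is therefore not needed at that step.
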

\begin{proof}
We first prove that         $f_*\bigl(\cC^*(Y)\bigr)\subseteq \cC^*(X)$
for
$*\in \{F,\acute e t\}$. 

For the case $* =F$.
If $E$ is Frobenius finite,
there exist distinct polynomials
$a(t),b(t)\in\mathbb N[t]$ such that
$\widetilde a(E)\cong \widetilde b(E)$. Since
$f$ is \'etale, by \cite[Lemma~13.2]{Mil80}, for every $ m\geq 0$ the square
\[
\begin{tikzcd}
Y \arrow[r,"F_Y^m"] \arrow[d,"f"']&
Y \arrow[d,"f"]\\
X \arrow[r,"F_X^m"']&
X
\end{tikzcd}
\]
is Cartesian. Since $X$ is regular, we have $F_X$ is flat. Hence flat base change gives
$F_X^{m*}f_*E\cong f_*F_Y^{m*}E$.
Since $f_*$ commutes with finite direct
sums and Frobenius pullback, we get
$       \widetilde a(f_*E)
        \cong
        f_*\widetilde a(E)
        \cong
        f_*\widetilde b(E)
        \cong
        \widetilde b(f_*E)$.
Thus $f_*E$ is Frobenius finite.
Let $E\in\cC^F(Y)$, then $E\cong E_2/E_1$, where $E_1\hookrightarrow E_2\hookrightarrow F$, $E_1$ and $E_2\in \cC^{NF}(Y)$, $F$ is a Frobenius finite bundle on $Y$. Then we have $f_*E_1\hookrightarrow f_*E_2\hookrightarrow f_*F$, where $f_*E_1$ and $f_*E_2\in\cC^{NF}(X)$, $f_*F$ is a Frobenius finite bundle on $X$. Thus $f_*E\cong f_*E_2/f_*E_1\in\cC^F(X)$. Hence $f_*\bigl(\cC^F(Y)\bigr)\subseteq \cC^F(X)$.

For the case $* =\acute e t$.  Let $E \in \cC^{\acute e t}(Y)$,  
there exists a finite \'etale covering $u: P\rightarrow Y$
such that  $u^*E$ is trivial.  We have $f^*f_*E \cong
 \bigoplus_{g\in G}g^*E$, each $g^*E$ is trivializable by a finite \'etale covering $u_g: P^g:= P{}_{u}\times_{g}Y\rightarrow Y$ where $g: Y \rightarrow Y \in G$. Denote $G=\{g_1, \cdots g_n\}$.
For any $1\leq i < n$,
take  
\[
v:P^{g_1} {}_{u_{g_1}}{\times}_{u_{g_2}} P^{g_2} {}_{u_{g_2}}\times \cdots \times_{g_{n-1}}P^{g_{n-1}} {}_{u_{g_{n-1}}}\times_{u_{g_n}} P^{g_n}
\xrightarrow{p_i} P^{g_i} \xrightarrow{u_{g_i}} Y,
\]
then $v$ is independent of $i$, $f\circ v$ is finite $\acute e$tale and trivializes $f_*E$, i.e. $f_*E \in \cC^{\acute e t}(X)$. Hence we have $f_*\bigl(\cC^{\acute{e}t}(Y)\bigr)\subseteq \cC^{\acute{e}t}(X)$.

Hence the natural sequence 
$ 1 \rightarrow
        \pi^*(Y,y)
        \rightarrow
        \pi^*(X,x)
        \rightarrow
        G
        \rightarrow 1$
is exact for $*\in
\{F,\acute e t\}$ by Theorem~\ref{thm:general-exact-sequence} (3).
The case $* \in \{ EF,E\acute{e}t\}$
 are directly followed by Corollary~\ref{thm:saturation-exact-sequence}.
\end{proof}

\begin{Example}\label{elliptic-counterexample}
Let $k$ be an algebraically closed and $\operatorname{char} k=p>0$, $X$ an elliptic curve over $k$, and choose a prime number $\ell\neq p$. Consider the multiplication-by-$\ell$ morphism
$f=[\ell]\colon Y=X\rightarrow X$.
Since $\ell\neq p$, this is a finite étale Galois covering. Its Galois group is
$\Gamma=X[\ell](k)\cong (\mathbb Z/\ell\mathbb Z)^2$.
Clearly
$\cO_Y\in \cC^{*}(Y)$ for $* \in\{Loc,ELoc,uni\}$.
We claim  that
$f_*\cO_Y\notin \cC^{*}(X)$ for $* \in\{Loc,ELoc,uni\}$.

Indeed, since $\ell\neq p$, by  Maschke’s Theorem, the group algebra $k[\Gamma]$ is semisimple. As $\Gamma$ is abelian and $k$ is algebraically closed, every irreducible representation of $\Gamma$ is one-dimensional. Hence
$k[\Gamma]\cong \bigoplus_{\chi\in \widehat{\Gamma}} k_\chi$,
where $\widehat{\Gamma}:=\operatorname{Hom}(\Gamma,k^\times)$ and $k_\chi$ denotes the one-dimensional representation corresponding to $\chi$.
We have $f_*\mathcal O_Y
\cong
\bigoplus_{\chi\in\widehat{\Gamma}} L_\chi$,
where
$L_\chi:=Y\times^\Gamma k_\chi$
is the line bundle on $X$ associated to the character $\chi$. For a non-trivial character $\chi\neq 1$, the line bundle $L_\chi$ is a non-trivial $\ell$-torsion line bundle.

We first show that $L_\chi\notin \cC^{Loc}(X)$ for any $\chi\neq 1$. Since $L_\chi$ is a line bundle, for every $n>0$ one has
$F_X^{n*}L_\chi\cong L_\chi^{\otimes p^n}$.
Because $\ell\nmid p^n$, the non-trivial $\ell$-torsion line bundle $L_\chi^{\otimes p^n}$ is still non-trivial. Hence
$F_X^{n*}L_\chi\not\cong \mathcal O_X$
for all $n>0$. Therefore
$L_\chi\notin \cC^{Loc}(X)$.
Since a rank-one unipotent bundle must be isomorphic to $\mathcal O_X$, we have
$L_\chi\notin \cC^{uni}(X)$.

If $L_\chi \in \cC^{ELoc}(X)$, it admits a filtration whose successive quotients belong to $\cC^{Loc}(X)$. Since $L_\chi$ has rank one, such a filtration can have only one non-zero quotient, hence $L_\chi$ itself would belong to $\cC^{Loc}(X)$, which is a contradiction. Hence
$L_\chi\notin \cC^{ELoc}(X)$.
\end{Example}

\section{Varieties with prescribed fundamental group schemes}
In this section, we apply the Tannakian results developed above to the Godeaux--Serre construction and study the realization of finite \'etale group schemes as fundamental group schemes of smooth projective varieties. We first collect the Bertini theorem over an arbitrary field and Lefschetz-type results for the $S$-fundamental group schemes over a perfect field. We then use these results to give a Godeaux--Serre construction and study the resulting fundamental group schemes.

\begin{Lemma}[General Bertini Theorem \cite{Poo04}, \cite{GK23}]
\label{lem:bertini}
Let $k$ be a field, $U\subset\PP^s_k$  a smooth
quasi-projective scheme of pure dimension $m\geq 1$.
Then there is a general hypersurface $H\subset\PP^s_k$ of sufficiently large degree such that
$U\cap H$ is smooth of pure dimension $m-1$.
\end{Lemma}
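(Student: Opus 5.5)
This is the Bertini theorem over an arbitrary field. The plan is to reduce it to Poonen's Bertini theorem over finite fields and to the classical Bertini theorem over infinite fields. The two cases are split according to whether $k$ is finite or infinite.

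\emph{Infinite $k$.} Fix $d\geq 1$ and embed $\PP^s_k$ by the Veronese embedding $\nu_d$ into $\PP^{N}_k$, $N=\binom{s+d}{d}-1$. Hypersurfaces of degree $d$ in $\PP^s$ then correspond to hyperplanes in $\PP^N$. The image $\nu_d(U)$ is again smooth and quasi-projective of pure dimension $m$.
\begin{enumerate}
\item I will invoke the classical Bertini smoothness theorem in the form given in \cite{GK23}: for a smooth quasi-projective $k$-scheme $V\subset\PP^N_k$ over an infinite field (in any characteristic), the hyperplanes $H$ with $V\cap H$ smooth form a dense open subset of the dual projective space. In characteristic $p$ one needs the hypothesis that $V\hookrightarrow\PP^N$ is an immersion, which holds here, rather than merely unramified.
\item Since $k$ is infinite, this dense open subset has $k$-points.
\item Dimension: for each irreducible component $U_j$ of $U$, a general $H$ avoids the generic point of $U_j$ and meets $U_j$ when $m\geq 1$, in dimension $m-1$. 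The one caveat is that $U_j$ need not be proper. If its closure in $\PP^s$ meets $H$ only in the boundary, then $U_j\cap H=\emptyset$, which is pure of dimension $m-1$ only vacuously. I will simply allow the empty case, or use that $\overline{U_j}\cap H\neq\emptyset$ and that the boundary $\overline{U_j}\setminus U_j$ has dimension less than $m$, so that a general $H$ meets $U_j$ itself.
\end{enumerate}
Hence in this case any $d\geq1$ works, and in particular every sufficiently large $d$.

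\emph{Finite $k$.} Here I will apply Poonen's theorem \cite{Poo04} directly. Let $\mathcal P_d$ be the set of degree-$d$ homogeneous polynomials. Poonen shows that the density of $\{F\in\mathcal P_d : U\cap V(F)\text{ smooth of dimension } m-1\}$ tends to $\zeta_U(m+1)^{-1}>0$ as $d\to\infty$. Positivity of this limit means that for all $d\gg0$ such $F$ exist. Pure dimensionality follows as before: a hypersurface containing no component of $U$ cuts each component in pure dimension $m-1$, and the condition of not containing a component also has density $1$.

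\emph{Main obstacle.} The delicate point is the precise meaning of \emph{general}. For infinite $k$ it means a $k$-point of a dense open subset of the parameter space, while for finite $k$ no such open set need have rational points in a fixed degree. This is exactly why \emph{sufficiently large degree} is needed, and I would phrase the lemma's conclusion as the existence of such $H$ for all $d\gg0$. The paper's later Godeaux--Serre step should only use existence, together with compatibility with finitely many further open conditions. For those further conditions I would also cite Poonen's density version, which allows imposing finitely many local conditions simultaneously.
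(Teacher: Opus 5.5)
Your proposal is correct and follows the paper's proof, which likewise splits into the infinite-field case, settled by \cite[Theorem~3.9]{GK23}, and the finite-field case, settled by Poonen's density theorems \cite[Theorems~1.1 and~1.2]{Poo04}. You add detail the paper omits: the Veronese reduction, the pure-dimension and nonemptiness bookkeeping, and why the positive limit density $\zeta_U(m+1)^{-1}$ forces existence for all $d\gg0$. One harmless slip: in characteristic $p$ Bertini smoothness already holds for unramified morphisms to $\PP^N$, because the incidence-variety dimension count only uses injectivity of the differential (ramified maps like Frobenius are what fail), but this does not affect your argument since the Veronese image is an immersion anyway.
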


\begin{proof}
If $k$ is an infinite field, it is directly followed from
\cite[Theorem~3.9]{GK23}.
If $k$ is a finite field, the assertion follows from
\cite[Theorem~1.1 and Theorem~1.2]{Poo04}.
\end{proof}

\begin{Lemma}[\cite{Lan11}, \cite{LiTi26}, \cite{LiTian26}]\label{lefschetz-thm}
Let $k$ be a perfect field, $X$ a smooth projective variety over $k$ of dimension $d$, 
$D \subsetneq X$ a smooth ample effective divisor, $x \in D(k)$. Then
\begin{enumerate}
    \item $\pi^S(\PP^n_k, x)=0$.
    \item  If  one of the following conditions is satisfied:
    \begin{enumerate}
        \item $\operatorname{char} k = 0$ and $d \geq 3$.

        \item  $\operatorname{char} k = p>0$ and $d \geq 3$, $H$ an ample divisor on $X$, $\alpha$ a
    nonnegative integer such that $T_X(\alpha H)$ is globally generated,
    $D-\alpha H$ ample and
    $DH^{d-1}
    >
    \max\left\{
        p\alpha H^d,\,
        (d+1)\alpha H^d-K_XH^{d-1}
    \right\}$.
  \end{enumerate}
Then the induced homomorphism
$\pi^S(D,x) \rightarrow \pi^S(X,x)$
        is an isomorphism.

\end{enumerate}
\end{Lemma}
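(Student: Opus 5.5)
The plan is to deduce both parts from the cited literature, checking only that the hypotheses match and that we may work over the perfect (not necessarily algebraically closed) field $k$. The first step is to reduce to $\overline{k}$. Since $k$ is perfect, $\overline{k}/k$ is separable, and $\pi^S$ commutes with such base change \cite{Lan11}: $\pi^S(X_{\overline{k}},x)\cong\pi^S(X,x)\times_k\overline{k}$. Together with Lemma~\ref{nef}, this shows that numerical flatness can be tested after extending scalars. A homomorphism of affine $k$-group schemes is an isomorphism, or is trivial, if and only if it is so after the faithfully flat extension $k\to\overline{k}$. It therefore suffices to prove (1) and (2) over $\overline{k}$, and I will assume $k=\overline{k}$ from now on.

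For (1), I will show directly that every numerically flat bundle $E$ on $\PP^n_k$ is trivial, so that $\cC^{NF}(\PP^n_k)$ is equivalent to $\Vect_k$ and $\pi^S$ is trivial. Restrict $E$ to a line $\ell\cong\PP^1$. There $E|_\ell\cong\bigoplus\cO(a_i)$, and nefness of $E|_\ell$ and of $E|_\ell^\vee$ forces all $a_i=0$. So $E$ is trivial on every line, and the standard argument finishes the proof. That argument uses the family of lines through a point, or induction on $n$ via restriction to a hyperplane together with $H^1(\PP^n,\cO(-1))=0$ to lift sections. It shows that $H^0(E)\otimes\cO\to E$ is an isomorphism. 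This step is routine.

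For (2)(a) I will invoke Langer's Lefschetz theorem for the $S$-fundamental group scheme \cite{Lan11}. In characteristic $0$, for a smooth ample divisor $D$ in a smooth projective $X$ of dimension $d\ge3$, the restriction $\pi^S(D,x)\to\pi^S(X,x)$ is an isomorphism. I will recall its two halves:
\begin{itemize}
\item surjectivity comes from full faithfulness of restriction of numerically flat bundles, which uses vanishing of $H^0$ and $H^1$ of $E\otimes\cO(-mD)$ for numerically flat $E$;
\item closed immersion, i.e.\ essential surjectivity up to subquotients, comes from extending numerically flat bundles from $D$ to its formal neighbourhood and then algebraizing via Grothendieck's Lefschetz conditions.
\end{itemize}
For (2)(b) I will cite the effective positive-characteristic version in \cite{LiTi26,LiTian26}. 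The numerical condition on $DH^{d-1}$, together with global generation of $T_X(\alpha H)$ and ampleness of $D-\alpha H$, is exactly what replaces Kodaira-type vanishing there. It gives the needed vanishing of $H^1(X,E\otimes\cO(-D))$ and $H^2(X,E\otimes\cO(-D))$ uniformly for numerically flat, and hence strongly semistable, $E$. It also controls Frobenius pullbacks through the bound involving $p\alpha H^d$.

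The main obstacle is purely bookkeeping. One must confirm that the cited statements are formulated over algebraically closed fields with exactly these hypotheses, and that they are invariant under the base change $k\to\overline{k}$. In particular, $D_{\overline{k}}$ must remain a smooth ample divisor satisfying the same intersection-number inequality, and $T_{X_{\overline{k}}}(\alpha H)$ must remain globally generated. Both hold because intersection numbers and global generation are stable under field extension. With that verified, the lemma follows by descending the isomorphism back to $k$.
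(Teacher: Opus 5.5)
Your proposal is correct. For (2) it rests on the same input as the paper, namely the published Lefschetz theorems, but part (1) and the surrounding bookkeeping go differently. The paper proves (1) by combining two facts. The first is the base change isomorphism $\pi^S(X_{\bar k},x_{\bar k})\cong\pi^S(X,x)_{\bar k}$ over a perfect field, which it takes from \cite[Proposition~5.8~(4)]{LiTi26} rather than from \cite{Lan11}. The second is Langer's result $\pi^S(\PP^n_{\bar k},x_{\bar k})=0$ \cite[Proposition~8.2]{Lan11}. For (2) the paper quotes \cite[Theorem~1.1]{LiTian26}, which is already formulated over a perfect field and covers both (a) and (b), so no descent is needed. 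You instead prove (1) by hand and descend (2) from $\bar k$.

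Your argument for (1) is more elementary than a citation, and it does not need the reduction to $\bar k$. Grothendieck's splitting on a line, together with the lines-through-a-point argument, works verbatim over any field. That argument is: blow up a rational point $x$; observe that $\sigma^*E$ is trivial on the fibres of the $\PP^1$-bundle $\mathrm{Bl}_x\PP^n\to\PP^{n-1}$, hence is pulled back from $\PP^{n-1}$; then restrict to the exceptional divisor, which is a section.

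Use that version. The hyperplane-induction alternative, as you phrase it, is incomplete. Lifting sections from $H$ needs $H^1(\PP^n,E(-1))=0$, not $H^1(\PP^n,\cO(-1))=0$. For $n=2$ this does not follow from triviality of $E$ on the single line $H$.

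Your descent in (2) is also sound. Smoothness, ampleness, intersection numbers and global generation of $T_X(\alpha H)$ are all stable under $k\to\bar k$, and isomorphisms of affine group schemes descend along $k\to\bar k$. What your route buys is that only the algebraically closed versions of the Lefschetz theorems are needed. The price is invoking base change for both $D$ and $X$, whereas the paper's one-line citation is shorter. Your sketch of which vanishing theorems drive those Lefschetz theorems is commentary and plays no logical role.
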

\begin{proof}
(1) By \cite[Proposition~5.8~(4)]{LiTi26}, the natural homomorphism $\pi^S(X_{\bar{k}}, x_{\bar{k}}) \rightarrow \pi^S(X, x)_{\bar{k}}$ is an isomorphism. By \cite[Proposition~8.2]{Lan11}, we have $\pi^S(\PP^n_{\bar{k}}, x_{\bar{k}})=0$. Hence $\pi^S(\PP^n_k, x)=0$.

(2) It is directly followed by \cite[Theorem~1.1]{LiTian26}.
\end{proof}

\begin{Proposition}\label{lem:free-locus-codim}
Let $k$ be a field, $G$  a finite \'etale group
scheme over $k$,  $V$  a finite-dimensional $G$-representation
such that the induced homomorphism
$G\rightarrow \operatorname{PGL}(V)$
is a monomorphism. For $m\geq 1$, let $B_m\subset \mathbf P(V^{\oplus m})$
be the nonfree locus under $G$-action. Then
$\lim_{m\to\infty}
\operatorname{codim}_{\mathbf P(V^{\oplus m})}(B_m)=\infty$.
\end{Proposition}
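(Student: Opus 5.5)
Since the statement concerns codimensions, I will first base change to $\bar k$. Codimension of a closed subset is invariant under field extension, and the formation of the nonfree locus commutes with base change. Over $\bar k$ the finite étale group scheme $G$ becomes a constant finite group $\Gamma=G(\bar k)$, and the homomorphism $\Gamma\to \operatorname{PGL}(V_{\bar k})$ is still injective. A point of $\mathbf P(V^{\oplus m})$ is nonfree exactly when its stabilizer in $\Gamma$ is nontrivial. Because $\Gamma$ is finite, this gives
\[
B_m=\bigcup_{1\neq g\in\Gamma}\mathbf P(V^{\oplus m})^{g},
\]
and it suffices to show that each fixed locus $\mathbf P(V^{\oplus m})^{g}$ with $g\neq 1$ has codimension tending to infinity as $m\to\infty$.

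Fix $g\neq 1$ and choose a lift $\tilde g\in \operatorname{GL}(V_{\bar k})$. A point $[v]$ is fixed by $g$ exactly when $\tilde g v=\lambda v$ for some $\lambda\in\bar k^\times$. Hence the fixed locus is the union, over the finitely many eigenvalues $\lambda$ of $\tilde g$, of the linear subspaces $\mathbf P(\ker(\tilde g-\lambda))$. For the representation $V^{\oplus m}$ the lift is $\tilde g^{\oplus m}$, whose $\lambda$-eigenspace is $\ker(\tilde g-\lambda)^{\oplus m}$. So, setting $n=\dim V$ and $e_\lambda=\dim\ker(\tilde g-\lambda)$, we get
\[
\operatorname{codim}\mathbf P\bigl(\ker(\tilde g-\lambda)^{\oplus m}\bigr)=mn-me_\lambda=m(n-e_\lambda).
\]

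The key point is that $e_\lambda<n$ for every eigenvalue $\lambda$. If $e_\lambda=n$, then $\tilde g$ would be the scalar $\lambda$, so $g$ would be trivial in $\operatorname{PGL}(V)$; this contradicts the injectivity of $\Gamma\to\operatorname{PGL}(V_{\bar k})$. Therefore
\[
\operatorname{codim}B_m\ \ge\ m\cdot\min_{g\neq1,\lambda}(n-e_\lambda)\ \ge\ m,
\]
and this tends to infinity.

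The only step needing care is the first one, namely justifying that the nonfree locus of a finite étale group scheme is defined scheme-theoretically and that it becomes the union above after base change to $\bar k$. I expect this to be the main obstacle. I would handle it by defining $B_m$ as the image in $\mathbf P(V^{\oplus m})$ of the closed subscheme of $(G\setminus\{e\})\times\mathbf P(V^{\oplus m})$ where the action agrees with the projection. This image is closed because $G$ is finite, and its formation commutes with flat base change to $\bar k$. The closedness of $B_m$ then also follows.
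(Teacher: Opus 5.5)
Your proposal is correct and follows essentially the same route as the paper. Both arguments pass to $\bar k$ and write the nonfree locus as the union over $1\neq\gamma\in\Gamma$ of fixed loci. Each fixed locus is then identified with a finite union of projectivized eigenspaces $\mathbf P(E_{\gamma,\lambda}^{\oplus m})$, and the fact that no nontrivial element acts by a scalar gives $\operatorname{codim} B_m\ge m(d-a)\ge m$. Your extra paragraph makes the nonfree locus precise scheme-theoretically, as the image of the equalizer in $(G\setminus\{e\})\times\mathbf P(V^{\oplus m})$. This fills in a point the paper leaves implicit, but does not change the argument.
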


\begin{proof}
Let $\bar{k}$ be an algebraic closure of $k$ and put
$\Gamma:=G(\bar{k})$.
Since $G$ is finite \'etale,
$G_{\bar{k}}\simeq \Gamma_{\bar{k}}$.
Set $d:=\dim_kV$. For $\gamma\in\Gamma\setminus\{1\}$ and $\lambda\in\bar{k}$,
write
$E_{\gamma,\lambda}
:=
\ker\bigl(
\rho(\gamma)-\lambda\operatorname{id}_{V_{\bar{k}}}
\bigr)
$
and put
$a_\gamma
:=
\max\limits_{\lambda\in\bar{k}}
\dim_{\bar{k}}E_{\gamma,\lambda}$. 
Since $G\to\operatorname{PGL}(V)$ is a monomorphism,
no nonidentity element of $\Gamma$ acts as a scalar on
$V_{\bar{k}}$. Thus $a_\gamma<d$. Since $\Gamma$ is finite,
$a:=\max\limits_{\gamma\in\Gamma\setminus\{1\}}a_\gamma<d$.

On the underlying topological
spaces
$|(B_m)_{\bar{k}}|
=
\bigcup\limits_{\gamma\in\Gamma\setminus\{1\}}
|\PP(V_{\bar{k}}^{\oplus m})^\gamma|$.
A geometric point of $\PP(V_{\bar{k}}^{\oplus m})$ fixed by $\gamma$ is
represented by an eigenvector of the action of $\gamma$ on
$V_{\bar{k}}^{\oplus m}$. Consequently,
$|\PP(V_{\bar{k}}^{\oplus m})^\gamma|
=
\bigcup\limits_{\lambda \in \bar{k}}
\mathbf P(E_{\gamma,\lambda}^{\oplus m})$,
and therefore
$\dim \PP(V_{\bar{k}}^{\oplus m})^\gamma
\le ma_\gamma-1
\le ma-1$.
It follows that
$\dim (B_m)_{\bar{k}}\le ma-1$.
Since
$\dim \PP(V_{\bar{k}}^{\oplus m})=md-1$,
we obtain
$\operatorname{codim}_{\PP(V_{\bar{k}}^{\oplus m})}
(B_m)_{\bar{k}}
\ge m(d-a)$.
Codimension does not change by extension of the ground field,
hence
$\operatorname{codim}_{\PP(V^{\oplus m})}(B_m)
\ge m(d-a)>0$, the assertion follows.
\end{proof}

\begin{Proposition}[ Godeaux--Serre construction {\cite[Proposition~15]{Ser58}}]
\label{thm:GS-construction}
Let $k$ be a field, $G$ a finite \(\acute{e}\)tale group scheme over $k$ and $r\geq 1$.
Then there exist an integer $N$, a linear action of $G$ on $\PP^N_k$, and a
$G$-stable smooth connected complete intersection
$Y\subset\PP^N_k$
of dimension $r$ with sufficiently large degree such that the action of $G$ on $Y$ is free. Moreover, the quotient $ X:=Y/G$
is a smooth projective connected variety, and $Y\to X$ is a finite
\(\acute{e}\)tale principal $G$-bundle. 
\end{Proposition}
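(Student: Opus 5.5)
We follow Serre's construction, using Proposition~\ref{lem:free-locus-codim} to control the nonfree locus and Lemma~\ref{lem:bertini} to cut down by invariant hypersurfaces.

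\textbf{Step 1: a projectively faithful representation.} Take $V := k[G]$, the regular representation. Since $G$ is finite étale, $G_{\bar k}\cong\Gamma_{\bar k}$ with $\Gamma=G(\bar k)$. A nonidentity $\gamma\in\Gamma$ permutes the standard basis of $\bar k[\Gamma]$ without fixed points, so it is not a scalar. Hence $G\to \operatorname{PGL}(V)$ is a monomorphism.

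\textbf{Step 2: make the nonfree locus small.} By Proposition~\ref{lem:free-locus-codim}, choose $m$ with $\operatorname{codim}(B_m) > r+1$ in $\PP^N_k := \PP(V^{\oplus m})$. Let $U:=\PP^N_k\setminus B_m$, which is open and $G$-stable. On $U$ the $G$-action is free. Since $G$ is finite, the geometric quotient $\PP^N_k\to \PP^N_k/G$ exists and is projective. Its restriction $U\to U/G$ is a finite étale $G$-torsor, so $U/G$ is smooth and quasi-projective.

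\textbf{Step 3: cut by invariant hypersurfaces.} Hypersurfaces in $U/G$ pull back to $G$-stable hypersurfaces $Y_1\cap\cdots$ in $\PP^N_k$. Concretely, take a very ample line bundle on $\PP^N_k/G$; it pulls back to $\cO(e)$ for some $e$, and its sections are $G$-invariant forms.

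Apply Lemma~\ref{lem:bertini} to $U/G$, embedded by a sufficiently high power of this very ample line bundle. Do this $N-r$ times, obtaining hypersurface sections $Z_1,\dots,Z_{N-r}$ of the projective closure $\overline{U/G}=\PP^N_k/G$. Because $\operatorname{codim} B_m>r+1$, the image of $B_m$ has dimension $<N-r-1$. So for general choices we can arrange, as in Serre, that the complete intersection $X' := Z_1\cap\cdots\cap Z_{N-r}$ avoids the image of $B_m$ entirely.

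Set $Y$ to be the preimage of $X'$ in $\PP^N_k$. Then $Y$ is a complete intersection of $G$-invariant hypersurfaces of large degree and lies in $U$. Moreover $Y\to X'$ is a $G$-torsor, so $Y$ is smooth of dimension $r$ because $X'$ is.

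\textbf{Step 4: connectedness and the quotient.} $Y$ is a positive-dimensional ($r\ge1$) complete intersection in $\PP^N_k$, so it is connected, indeed geometrically connected, since $H^0(Y,\cO_Y)=k$ for complete intersections. Hence $X:=Y/G=X'$ is smooth, projective and connected, and $Y\to X$ is a finite étale principal $G$-bundle.

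\textbf{Main obstacle.} The delicate point is Step 3: arranging the avoidance of the nonfree locus together with smoothness over an arbitrary (possibly finite) field. Avoidance requires choosing each section to contain no component of the remaining intersection with the image of $B_m$, a dimension-counting argument. Over a finite field, "general" must be replaced by Poonen's density statements in Lemma~\ref{lem:bertini}, combined with these avoidance conditions. Poonen's theorem permits imposing finitely many such open conditions at high degree, so I would invoke that version at each step.
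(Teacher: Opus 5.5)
Your proposal is correct and follows essentially the same route as the paper: take a projectively faithful representation (the regular one), replace it by $V^{\oplus m}$ so that the nonfree locus has large codimension (Proposition~\ref{lem:free-locus-codim}), apply Lemma~\ref{lem:bertini} repeatedly on the smooth free part of the quotient while avoiding the image of the nonfree locus, and take $Y$ to be the preimage of the resulting smooth $X'$. Your explicit handling of the avoidance condition (including Poonen's Taylor conditions over finite fields) and your proof of connectedness via $H^0(Y,\cO_Y)=k$ make precise points the paper leaves implicit. Note also that $\codim B_m>r$, as used in the paper, already suffices.
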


\begin{proof}
Choose a faithful representation $V$ of $G$ such that the induced homomorphism $G\to\operatorname{PGL}(V)$ is a monomorphism (e.g., the regular representation). This monomorphism induces a $G$-action on $\PP(V)$. By Proposition~\ref{lem:free-locus-codim}, we may assume that
the nonfree locus $B\subset\PP(V)$ under $G$-action has codimension greater than $r$.  Let
$p:\PP(V)\rightarrow Z:=\PP(V)/G$
be the finite projective quotient and $B_0=p(B)$.  The open subscheme
$Z\setminus B_0$ is smooth, and $p|_{\PP(V)\setminus B}$ is finite \(\acute{e}\)tale.

Choose a projective embedding $Z\hookrightarrow\PP^s_k$.  Set
$\dim\PP(V)=N$ and $c=N-r$.  By successive applications of Lemma~\ref{lem:bertini} on the
smooth open subscheme $Z\setminus B_0$, one can choose hypersurfaces
$H_1,\ldots,H_c\subset\PP^s_k$ with any sufficiently
large degree such that
$Z':=Z\cap H_1\cap\cdots\cap H_c =(Z\setminus B_0)\cap H_1\cap\cdots\cap H_c$
is smooth projective of dimension $r$ and is disjoint from $B_0$.  The inequality
$\codim_{\PP(V)}B>r$ guarantees that $B_0\cap H_1\cap\cdots\cap H_c$ is empty after $c$
steps.

Put $Y=p^{-1}(Z')$.  The morphism $Y\to Z'$ is finite \(\acute{e}\)tale,
so $Y$ is smooth.  If the composition $\PP(V)\rightarrow Z \hookrightarrow\PP^s_k$ is defined by homogeneous
invariants $f_0,\ldots,f_s$ of a common degree, and $H_i$ has equation
$h_i$, then $Y$ is defined by the regular sequence
$h_1(f_0,\ldots,f_s),\ldots,h_c(f_0,\ldots,f_s)$.
Hence $Y \subset \PP(V)$ is a smooth projective connected complete intersection of dimension $r$. Therefore
$X=Y/G=Z'$ is a smooth projective connected variety, and $Y\to X$ is a finite
\(\acute{e}\)tale principal $G$-bundle.
\end{proof}

\begin{Theorem}\label{thm:GS-problem}
Let $k$ be a perfect field,  $G$ an affine group scheme over $k$, and $r\geq 2$. Then
\begin{enumerate}
    \item If $G$ is a finite \'etale group scheme, then there exist a smooth projective connected variety $X$ of dimension $r$ and $x \in X(k)$ such that $\pi^*(X, x) \cong G$ for any $* \in \{S, N, EN \}$.
    \item If $G$ is a finite  group, then there exist a smooth projective connected variety $X$ of dimension $r$ and $x \in X(k)$ such that $\pi^*(X, x) \cong G$ for any $*\in
        \{F,EF,\acute e t,E\acute e t\}$.
\end{enumerate}    
\end{Theorem}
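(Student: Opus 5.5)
We apply Proposition~\ref{thm:GS-construction} to $G$ (a finite constant group is in particular finite \'etale) with dimension $r$. This gives a $G$-stable smooth connected complete intersection $Y\subset\PP^N_k$ of dimension $r$, inside the free locus, with $f:Y\to X:=Y/G$ a finite \'etale principal $G$-bundle and $X$ smooth projective connected. The strategy is to show that the relevant fundamental group schemes of $Y$ are trivial, and then read off $\pi^*(X,x)\cong G$ from the exact sequences of Theorem~\ref{exact-certain-categories} and Theorem~\ref{finite-etale-exact}.

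First we need rational points: Theorem~\ref{exact-certain-categories} requires $y\in Y(k)$ over $x\in X(k)$. Over a finite or infinite perfect field, I would arrange this in the Bertini step. Fix a rational point $z\in Z\setminus B_0$ whose fibre $p^{-1}(z)$ contains a $k$-point. Such a point exists because $G$ acts freely at some $k$-point of $\PP(V)\setminus B$; for $k$ finite we enlarge $m$ if necessary so that $\PP(V^{\oplus m})\setminus B_m$ has $k$-points. We then choose the hypersurfaces $H_i$ through $z$, which Poonen's Bertini with Taylor conditions and the infinite-field Bertini allow. This point-fixing is a technical obstacle I would have to check carefully against the cited Bertini statements.

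Next we show $\pi^S(Y,y)=0$. Write $Y=Y_0\cap D_1\cap\cdots\cap D_c$, where $Y_0=\PP^N$ and $Y_j=Y_{j-1}\cap D_j$ with each $D_j$ a smooth ample divisor of large degree. The successive intersections are smooth because they are preimages of the smooth $Z\cap H_1\cap\cdots\cap H_j$ under the \'etale map $p$ on the free locus. However, $\PP^N$ itself contains $B$, so we instead run Lemma~\ref{lefschetz-thm}(2) on the whole projective space. We therefore need the intermediate $Y_j\subset\PP^N$ to be smooth, not only on the free locus. I would handle this by applying Bertini in $\PP^N$ directly to $G$-invariant hypersurfaces $h_j(f_0,\dots,f_s)$, or by intersecting further with generic hypersurfaces while keeping invariance. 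This is the main obstacle, and I expect it to be resolved by choosing the degrees large enough that the smoothness near $B$ is irrelevant: if $\operatorname{codim} B>N-r+\ldots$, each $Y_j$ meets $B$ in a locus we can avoid by taking a general member of the invariant linear system. Once all $Y_j$ are smooth, Lemma~\ref{lefschetz-thm}(2) gives $\pi^S(Y_j)\cong\pi^S(Y_{j-1})$ while $\dim Y_{j-1}\ge3$, that is, down to dimension $r\ge2$. In characteristic $p$ the numerical inequality holds for sufficiently large degrees, with $H$ the hyperplane class and $\alpha=1$ (or $\alpha$ fixed by the global generation of $T(\alpha H)$ on $\PP^N$, inherited via restriction). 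Then Lemma~\ref{lefschetz-thm}(1) yields $\pi^S(Y,y)=0$.

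Finally, since $\cC^{\acute et}\subseteq\cC^F\subseteq\cC^N\subseteq\cC^{NF}$, every object of these categories on $Y$ is trivial. The corresponding groups are quotients of $\pi^S(Y,y)=0$, so they are trivial. The extended versions are also trivial: their categories consist of iterated extensions of trivial bundles, and on a complete intersection of dimension $\ge2$ we have $H^1(Y,\cO_Y)=0$, so these extensions split. Theorem~\ref{exact-certain-categories} then gives $\pi^*(X,x)\cong G$ for $*\in\{S,N,EN\}$. For a constant finite group, $f$ is Galois with group $\operatorname{Aut}_X(Y)=G$ and $X$ is smooth, hence regular, so Theorem~\ref{finite-etale-exact} gives $\pi^*(X,x)\cong G$ for $*\in\{F,EF,\acute et,E\acute et\}$.
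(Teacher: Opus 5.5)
Your route is the paper's. You build the Godeaux--Serre cover $f\colon Y\to X=Y/G$ and get $\pi^S(Y,y)=0$ from Lemma~\ref{lefschetz-thm}. You note that the other categories on $Y$ lie in $\cC^{NF}(Y)$ and so are trivial, and you read off $\pi^*(X,x)\cong G$ from Theorems~\ref{exact-certain-categories} and~\ref{finite-etale-exact}. For the extended categories the paper uses that numerically flat bundles are closed under extensions, so $\overline{\cC^N(Y)}\subseteq\cC^{NF}(Y)$; your $H^1(Y,\cO_Y)=0$ argument works equally well.

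The paper's proof is much terser than yours. It just says \emph{choose} $y\in Y(k)$ over $x$, and \emph{by Lemma~\ref{lefschetz-thm}} $\pi^S(Y,y)=0$. Your treatment of rational points is correct and fills a real omission. For nonconstant $G$ a $k$-point of $X$ need not lift, and over a finite field nothing in the construction guarantees $X(k)\neq\emptyset$. The $k$-points of $B_m$ lie on a number of $k$-rational linear subspaces of dimension $\le ma-1$ that is bounded independently of $m$. Hence $(\PP(V^{\oplus m})\setminus B_m)(k)\neq\emptyset$ for $m\gg0$. Bertini with Taylor conditions, or with one base point when $k$ is infinite, then lets you pass the $H_i$ through $p(v)$.

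Your justification of the Lefschetz chain $\PP^N=Y_0\supset Y_1\supset\cdots\supset Y_c=Y$ does not work as written, in two places.

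First, $B$ cannot be avoided. $Y_j$ is cut out by $j$ hypersurfaces, so $Y_j\cap B\neq\emptyset$ whenever $j\le\dim B$. What you need is that a general $G$-invariant section is smooth \emph{at} the points of $B$.
\begin{itemize}
\item In characteristic $0$ this is classical Bertini for the base-point-free system $\{p^{-1}(H)\cap Y_{j-1}\}_{H\in|\cO_{\PP^s}(e)|}$ on the smooth $Y_{j-1}$, applied inductively together with your other open conditions.
\item In characteristic $p$ it does not follow from Lemma~\ref{lem:bertini}, because $Z$ is singular along $B_0$. Bertini for morphisms in positive characteristic needs the morphism to be unramified, and $p$ ramifies along $B$.
\end{itemize}
So in characteristic $p$ a local argument at points with nontrivial stabilizer is still missing. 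The paper also passes over this point without comment.

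Second, global generation of $T_{Y_{j-1}}(\alpha H)$ is not inherited from $\PP^N$. $T_{Y_{j-1}}$ is a subbundle, not a quotient, of $T_{\PP^N}|_{Y_{j-1}}$, and $K_{Y_{j-1}}$ grows with the degrees. Write $n=\dim Y_{j-1}$ and $Y=Y_{j-1}$. Using $T_{Y}\cong\wedge^{n-1}\Omega_{Y}\otimes\omega_{Y}^{-1}$ and global generation of $\Omega_{Y}(2)$, one can take any $\alpha\ge 2(n-1)+\sum_{i<j}e_i-N-1$. The condition $e_j>p\alpha$ then forces you to choose the degrees inductively with $e_j\gg p\sum_{i<j}e_i$. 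This is possible because each Bertini step accepts any sufficiently large degree.
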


\begin{proof}
 By Theorem~\ref{thm:GS-construction}, we have a smooth complete intersection  $Y \subset \PP^N_k$ of dimension $r$ with sufficiently large degree and $X:= Y/G$ a smooth projective connected variety such that $Y\rightarrow X$ is a principal $G$-bundle. Choose $y\in Y(k)$ a rational point lying over $x \in X(k)$. By Lemma~\ref{lefschetz-thm}, $\pi^S(Y,y)=0$.
 
(1)  Since $\cC^{*}(Y) \subseteq \cC^{NF}(Y)$ for any $* \in \{N, EN\}$, we have $\pi^*(Y,y)=0$ for $* \in \{N, EN\}$. Hence by Theorem~\ref{exact-certain-categories}, we have $\pi^*(X, x) \cong G$ for any $* \in\{S, N, EN \}$.

(2) Since $\cC^{*}(Y) \subseteq \cC^{NF}(Y)$ for any $*\in
\{F,EF,\acute e t,E\acute e t\}$, we have $\pi^*(Y,y)=0$ for $*\in
\{F,EF,\acute e t,E\acute e t\}$. Hence by Theorem~\ref{finite-etale-exact}, we have $\pi^*(X, x) \cong G$ for any $*\in
\{F,EF,\acute e t,E\acute e t\}$.
\end{proof}

For finite \'etale group schemes, the
Godeaux--Serre construction produces smooth projective varieties whose
relevant Tannakian fundamental group schemes are identified with the
prescribed structure group. However, the results above leave open the realization problem for arbitrary finite group schemes. 
This leads naturally to the following realization question.

\textbf{Question}:
If $k$ is a field, $G$ a finite $k$-group scheme, and
$r\geq 2$, does there exist a smooth projective geometrically
connected $k$-variety $X$ of dimension $r$ and a $k$-rational point
$x\in X(k)$ such that
$\pi^S(X,x)\simeq\pi^N(X,x)\simeq\pi^{EN}(X,x)\simeq G$
as affine $k$-group schemes?


\begin{thebibliography}{99}

\bibitem{AdAm25} P. Adroja and S. Amrutiya, \emph{On an extension of Nori and local fundamental group schemes}, Comm. Algebra {\bf 53} (2025), no.~10, 4241--4255.


\bibitem{AmBi10} S. Amrutiya, I. Biswas, \emph{On the F-fundamental group scheme}, Bull. Sci. Math. {\bf 134} (2010), no. 5, 461--474.

\bibitem{AnEm11}
M.~Antei and M.~Emsalem,
\emph{Galois closure of essentially finite morphisms},
J. Pure Appl. Algebra \textbf{215} (2011), no.~11, 2567--2585.

\bibitem{Ara95}
D.~Arapura,
\emph{Fundamental groups of smooth projective varieties},
in \emph{Current topics in complex algebraic geometry},
Math. Sci. Res. Inst. Publ., vol.~28, Cambridge Univ. Press, Cambridge, 1995,
1--16.

\bibitem{AOV08}D. Abramovich, M.~C. Olsson and A. Vistoli, \emph{Tame stacks in positive characteristic}, Ann. Inst. Fourier (Grenoble) {\bf 58} (2008), no.~4, 1057--1091.

\bibitem{BHP15} I. Biswas, A. Hogadi and A.~J. Parameswaran, Fundamental group of a geometric invariant theoretic quotient, Transform. Groups {\bf 20} (2015), no.~2, 367--379.

\bibitem{BHS21} I. Biswas, P. H. Hai and J.~P.~P. dos~Santos, \emph{On the fundamental group schemes of certain quotient varieties}, Tohoku Math. J. (2) {\bf 73} (2021), no.~4, 565--595.

\bibitem{CLM22} R. Cheng, C. Lian and T. Murayama, Projectivity of the moduli of curves, in {\it Stacks Project Expository Collection (SPEC)}, 1--43, London Math. Soc. Lecture Note Ser., 480, Cambridge Univ. Press, Cambridge.


\bibitem{DaEs22} M. D'Addezio, H. Esnault, \emph{On the universal extensions in Tannakian categories}, Int. Math. Res. Not. IMRN {\bf 2022}, no.~18, 14008--14033.

\bibitem{EHS07} H. Esnault, P. H. Hai, X. Sun, \emph{On Nori's fundamental group scheme}, Geometry and dynamics of groups and spaces, 377--398, Progr. Math., 265, Birkhäuser, Basel, 2007.

\bibitem{Gro60} A. Grothendieck, \emph{Revêtements étales et groupe fondamental}, Séminaire de Géométrie Algébrique du Bois-Marie, (SGA 1), 1960/61, Lectures Notes in Mathematics, Vol. 224, Springer-Verlag, Berlin, (1971).

\bibitem{GK23}
M.~Ghosh and A.~Krishna,
\emph{Bertini theorems revisited},
J. Lond. Math. Soc. (2) {\bf108} (2023), no.~3, 1163--1192.

\bibitem{Lan11} A. Langer, \emph{On the S-fundamental group scheme}, Ann. Inst. Fourier (Grenoble) {\bf 61} (2011), no.~5, 2077--2119 (2012).

\bibitem{Lan12} A. Langer, \emph{On the S-fundamental group scheme. II}, J. Inst. Math. Jussieu {\bf 11} (2012), no.~4, 835--854.

\bibitem{LiTi26}
L.~Li and N.~Tian,
\emph{The Base Change Of Fundamental Group Schemes}, \url{https://doi.org/10.48550/arXiv.2602.11110}, 2026.

\bibitem{LiTian26}
L.~Li and N.~Tian,
\emph{The Lefschetz type theorem for fundamental group schemes},
\url{https://doi.org/10.48550/arXiv.2604.19546}, 2026.

\bibitem{MeSu08} V.~B. Mehta and S. Subramanian, \emph{Some remarks on the local fundamental group scheme}, Proc. Indian Acad. Sci. Math. Sci. {\bf 118} (2008), no.~2, 207--211.

\bibitem{Mil80} J.~S. Milne, \emph{\'Etale cohomology}, Princeton Mathematical Series, No. 33, Princeton Univ. Press, Princeton, NJ, 1980.

\bibitem{Mil12} J. S. Milne, \emph{Basic Theory of Affine Group Schemes}, 2012, Available at www.jmilne.org/math/.


\bibitem{Nor76} M. V. Nori, \emph{On the representations of the fundamental group}, Compositio Math. {\bf 33} (1976), no.~1, 29--41.

\bibitem{Nor82} M. V. Nori, \emph{The fundamental group-scheme}, Proc. Indian Acad. Sci. Math. Sci. {\bf 91} (1982), no.~2, 73--122.

\bibitem{Ota17} S. Otabe, \emph{An extension of Nori fundamental group}, Comm. Algebra {\bf 45} (2017), no.~8, 3422--3448.

\bibitem{Poo04}
B.~Poonen,
\emph{Bertini theorems over finite fields},
Ann. of Math. (2) \textbf{160} (2004), no.~3, 1099--1127.

\bibitem{Run18}
N.~Rungtanapirom,
\emph{Godeaux--Serre varieties with prescribed arithmetic fundamental group},
J. Pure Appl. Algebra \textbf{222} (2018), no.~11, 3337--3344.

\bibitem{Ser58}
J.-P. Serre,
\emph{Sur la topologie des vari\'et\'es alg\'ebriques en caract\'eristique $p$},
in \emph{Symposium internacional de topolog\'ia algebraica},
Universidad Nacional Aut\'onoma de M\'exico and UNESCO, Mexico City, 1958,
24--53.

\bibitem{Vis05}A. Vistoli, Grothendieck topologies, fibered categories and descent theory, in {\it Fundamental algebraic geometry}, Math. Surveys Monogr., 123, Amer. Math. Soc., Providence, RI, 2005, 1--104. 


\end{thebibliography}
\end{document}